\documentclass[11pt]{amsart}

%CPC theorems
\newtheorem{thm}{Theorem}[section]
\newtheorem*{thmn}{Theorem}
\newtheorem{lem}[thm]{Lemma}

\theoremstyle{definition}
\newtheorem{defn}[thm]{Definition}
\theoremstyle{remark}
\newtheorem{ques}[thm]{Question} 
\newtheorem*{rem}{Remark}
\numberwithin{equation}{section}

%CPC Macros
% Typographical conveniences
\newcommand{\caseface}[1]{\textbf{#1}}

\newcommand{\seq}[2]{( #1_{#2})_{#2=1}^\infty} 
\newcommand{\indexed}[3]{( #1_{#2})_{#2\in#3}} 
\newcommand{\finseq}[3]{( #1_{#2})_{#2=1}^{#3}} 
\newcommand{\subseq}[3]{( #1_{ {#2}_{#3} })_{#3=1}^\infty} 
\newcommand{\finsubseq}[4]{( #1_{ {#2}_{#3} })_{#3=1}^{#4}}
% Sets
 
\newcommand{\rsphere}{\mathbb{C}_\infty} 
\newcommand{\nat}{\mathbb{N}}

\newcommand{\Disk}{\mathbb{D}} 
\newcommand{\sphere}{\mathbb{S}^2} 
\newcommand{\ucirc}{\bd \Disk}
% Functions
 
\newcommand{\bd}{\partial} 
 
\newcommand{\Int}{\mathrm{Int}}

\begin{document}

\title[Indecomposable Continua in Surfaces] {Recognizing Indecomposable Subcontinua of Surfaces from Their Complements} 

%    Information for first author:
\author[C.~P.~Curry]{Clinton P.~Curry} 
\address[Clinton P.~Curry] {Department of Mathematics\\
University of Alabama at Birmingham\\
Birmingham, AL 35294-1170}
%    Current address (if needed): 
%\curraddr{}
\email[Clinton P.~Curry]{clintonc@uab.edu}
\thanks{The author was supported in part by NSF-DMS-0353825.}

%    Information for second author (if needed): 
%\author{Author Two}
%\address{}
%\email{}
%\thanks{Support information for the second author.}

%    General info
\subjclass[2000]{Primary 54F15; % Continua
Secondary 57N05} %Topology of 2-manifolds

\keywords{Indecomposable continuum, complementary domain, closed surface, double-pass condition}

\begin{abstract}
	We prove two theorems which allow one to recognize indecomposable subcontinua of closed surfaces without boundary.
	If $X$ is a subcontinuum of a closed surface $S$, we call the components of $S \setminus X$ the \emph{complementary domains of $X$}.
	We prove that a continuum $X$ is either indecomposable or the union of two indecomposable continua whenever it has a sequence $\seq U n$ of distinct complementary domains such that $\lim_{n \rightarrow \infty} \bd U_n = X$.
	We define a slightly stronger condition on the complementary domains of $X$, called the \emph{double-pass condition}, which we conjecture is equivalent to indecomposability.
	We prove that this is so for continua which are not the boundary of one of their complementary domains.
\end{abstract}

\maketitle

\section{Introduction}\label{sec:introduction}

For us, a \emph{continuum} is a compact connected metric space. 
A \emph{closed surface} is a compact and connected, but not necessarily orientable, 2-manifold without boundary. 
We are interested in conditions which imply that a subcontinuum of a closed surface is topologically complicated. 
Further, we would like for these conditions to rely upon how the continuum is embedded in its ambient space, rather than to use internal characteristics of the continuum.

We use \emph{indecomposability} as a criterion for topological complexity. 
A continuum $X$ is \emph{decomposable} if it can be written as the union of a pair of proper subcontinua $A$ and $B$. 
The pair $(A, B)$ is then called a \emph{decomposition} of $X$. 
A continuum which is not decomposable is called \emph{indecomposable}.

Non-degenerate indecomposable continua (i.e., indecomposable continua consisting of more than a point) are certainly quite complicated and have a rich internal structure. 
For example, a non-degenerate continuum $X$ is indecomposable if and only if, for all $p \in X$, the \emph{composant} of $p$
	\[C_p = \bigcup \{\text{proper subcontinua of $X$ containing $p$}\}\]
is a dense set of first category \cite{HocYou61}, in which case there are uncountably many disjoint composants. 
In contrast, decomposable continua have either one or three composants. 

The rich structure of indecomposable continua provides many interesting internal methods of recognizing indecomposability.
%\footnote{In fact, one may assert that the structure is internal in a very concrete sense.  
%For instance, define a composant $C$ of a planar continuum to be \emph{external} if there is a continuum $A \subset \sphere$ such that $A \cap C \neq \emptyset$ and $A$ does not hit every other composant of $X$.  
%The union of all external composants of a non-degenerate indecomposable continuum is a set of first category \cite{Kra74}, indicating that the view from the complement is somehow very thin in the whole continuum.  
%This is one reason that recognition of indecomposable continua from the complement initially appears less fruitful.}
We are instead interested in recognizing indecomposability based on the continuum's interaction with the space in which it lies. 
This approach is in the spirit of some classical work of Kuratowski \cite{Kur68}, Rutt \cite{Rut35}, and Burgess \cite{Bur51}.

In Section~\ref{sec:priorWork}, we recall some earlier theorems which the current work extends. 
In Section~\ref{sec:burgess}, we extend a theorem of C.~E. Burgess about planar continua.
In Section~\ref{sec:charThm} we extend the characterization of planar indecomposable continua in \cite{CurMayTym07} to certain subcontinua of closed surfaces -- those subcontinua which are not the boundary of any of their complementary domains. 
Finally, in Section~\ref{sec:conclusion} we state questions and conjectures.

I would like to thank my advisor, Dr.~John Mayer, for many helpful discussions
and tireless proofreading of preliminary drafts.  I would also
like to thank the referee for careful attention that improved the quality of
this paper.

\section{Prior Work and Notions}\label{sec:priorWork} 

Let $S$ be a closed surface, and $X \subset S$ be a continuum. 
A component of $S \setminus X$ is called a \emph{complementary domain} of $X$.  
If $X$ equals the boundary of one of its complementary domains, then $X$ is called \emph{unshielded}; otherwise, it is \emph{shielded}.  

In order to shorten some statements, we say that a continuum $X$ is \emph{2-indecomposable} if $X$ is decomposable but cannot be written as the essential union of three proper subcontinua. 
By a theorem of Burgess~\cite[Theorem 1]{Bur51}, a 2-indecomposable continuum is in some sense uniquely decomposable. 
Specifically, there exists a decomposition $(A, B)$ of $X$ where $A$ and $B$ are indecomposable continua. 
Further, if $(C, D)$ is any other decomposition, then $C$ and $D$ each contain exactly one of $A$ and $B$.

The first and most famous result in the vein of this work is by Kuratowski. 
It deals exclusively with continua which are unshielded.
\begin{thm} [\cite{Kur68}] \label{thm:kuratowski} 
	Let $X$ be a planar continuum. 
	If $X$ is the common boundary of three of its complementary domains, then $X$ is either indecomposable or 2-indecomposable. 
\end{thm}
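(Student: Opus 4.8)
The plan is to prove the contrapositive: if a plane continuum $X$ is neither indecomposable nor $2$-indecomposable, then it is not the common boundary of three of its complementary domains. Unwinding the definition of $2$-indecomposability, such an $X$ is the \emph{essential} union of three proper subcontinua, $X = A \cup B \cup C$, in the sense that each of the relatively open sets $A \sm (B \cup C)$, $B \sm (A \cup C)$, $C \sm (A \cup B)$ is nonempty. Working in the $2$-sphere $\sphere$ (the one-point compactification of the plane), it is enough to show that $\sphere \sm X$ has at most two components, since being the common boundary of three distinct complementary domains forces $X$ to have at least three.

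The decisive ingredient, and the one place where two-dimensionality is genuinely used, is the following.

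\smallskip
\noindent\emph{Lemma.} \emph{If a continuum $X \subseteq \sphere$ is the common boundary of three of its complementary domains, then no proper subcontinuum $K \subsetneq X$ separates $\sphere$.}
\smallskip

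To prove the lemma, suppose $K \subsetneq X$ separates $\sphere$, with components $V_1, \dots, V_m$, $m \ge 2$. No $V_i$ lies inside $X$: otherwise $V_i$ would be a nonempty open set contained in $X \subseteq \bd U$ for one of the three full-boundary domains $U$, contradicting that the frontier of an open set has empty interior. Hence each $V_i$ contains a complementary domain of $X$. Let $W_1, W_2, W_3$ be the domains with $\bd W_s = X$, and say $W_s \subseteq V_{i_s}$. If the $i_s$ are not all equal -- say $i_1 \neq i_2$ -- then $X = \bd W_1 \subseteq \cl{V_{i_1}}$ and $X = \bd W_2 \subseteq \cl{V_{i_2}}$, so $X \subseteq \cl{V_{i_1}} \cap \cl{V_{i_2}} \subseteq K$, contradicting $K \subsetneq X$. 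There remains the case $i_1 = i_2 = i_3$: all three full-boundary domains lie in one component $V_0$ of $\sphere \sm K$, which forces $X \sm K \subseteq V_0$ and forces each other component of $\sphere \sm K$ to be itself a complementary domain of $X$. Excluding this last configuration is the step I expect to be the main obstacle: the soft point-set arguments above do not reach it, and one must bring in a genuinely planar tool -- Janiszewski's theorem, or the theory of accessible points and prime ends applied to the domains $W_s$ sitting inside $\cl{V_0}$ -- to produce the contradiction.

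Granting the lemma, the proof is finished by a Mayer--Vietoris computation in \v{C}ech cohomology. Each of $A$, $B$, $C$ is a proper subcontinuum of $X$, as is each of $A \cup B$, $B \cup C$, $C \cup A$ that is connected; so by the lemma none of these separates $\sphere$, that is, each has $\check{H}^1(\,\cdot\,;\zed) = 0$. In particular, whenever two of $A, B, C$ meet, Mayer--Vietoris for their (connected, non-separating) union forces their intersection to be connected. Since $X$ is connected, either exactly one pair among $A, B, C$ is disjoint -- say $A \cap B = \emptyset$, so that $B \cap C$ is a nonempty connected set and Mayer--Vietoris for $X = (A \cup C) \cup B$, whose two pieces meet exactly in $B \cap C$, gives $\check{H}^1(X;\zed) = 0$ -- or all three pairs meet, so that $A \cap C$ and $B \cap C$ are connected and Mayer--Vietoris for $X = (A \cup B) \cup C$, whose pieces meet in the union of two connected sets $(A \cap C) \cup (B \cap C)$, gives $\operatorname{rank} \check{H}^1(X;\zed) \le 1$. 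In either case $\operatorname{rank} \check{H}^1(X;\zed) \le 1$, and Alexander duality in $\sphere$ identifies this rank with that of $\tilde{H}_0(\sphere \sm X;\zed)$, which is one less than the number of complementary domains of $X$. Hence $X$ has at most two complementary domains, the desired contradiction. (Burgess's theorem quoted above is not logically needed for the statement -- ``$X$ is not an essential union of three proper subcontinua'' is precisely ``$X$ is indecomposable or $2$-indecomposable'' -- but it is what gives the conclusion its content, since in the decomposable case it upgrades this to the existence of an essentially unique decomposition of $X$ into two indecomposable continua.)
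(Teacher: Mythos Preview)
The paper does not prove this theorem; it is quoted as a classical result of Kuratowski and serves only as motivation. So there is no proof in the paper to compare against, and your proposal has to stand on its own.

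It does not. Your argument hinges on the Lemma that no proper subcontinuum of a triple-boundary continuum separates $\sphere$, and you yourself flag the case you cannot close: all three full-boundary domains $W_1,W_2,W_3$ lying in a single component $V_0$ of $\sphere\setminus K$. Your analysis of that case is correct as far as it goes---each remaining component $V_j$ of $\sphere\setminus K$ is indeed a complementary domain of $X$ with $\bd V_j\subset K\subsetneq X$---but this is \emph{not} a contradiction. Nothing in the hypothesis forbids $X$ from having complementary domains beyond $W_1,W_2,W_3$, and such extra domains may perfectly well have boundary contained in a proper subcontinuum. Naming Janiszewski's theorem or prime-end theory as candidate tools is not the same as applying them; neither obviously resolves this configuration, and until the Lemma is actually established the (otherwise correct) Mayer--Vietoris and Alexander-duality computation that follows is unsupported.

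For contrast, the classical route---and the one the paper takes for its own generalization in Theorem~\ref{thm:burgessSurface}---is combinatorial rather than cohomological: choose small disks $D_i$ about points of $X_i\setminus\bigcup_{j\neq i}X_j$, run an arc through each domain $W_s$ meeting all three disks, and assemble an embedded $K_{3,3}$ (equivalently, a $\theta$-curve separating three points) in $\sphere$, contradicting planarity. That argument never asks whether proper subcontinua separate the sphere. If you want to salvage the cohomological approach, you must either prove the Lemma in full or replace it by a weaker statement---tailored to the six specific subcontinua $A$, $B$, $C$, $A\cup B$, $A\cup C$, $B\cup C$ you actually use---that you can verify directly and that still feeds the Mayer--Vietoris step.
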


C.~E. Burgess generalized this and other related theorems. 
The following is a corollary to the main result in \cite{Bur51}.  
It is an improvement over Theorem~\ref{thm:kuratowski} in that it can detect indecomposability in shielded continua.
\begin{thm} [\cite{Bur51}] \label{thm:burgess}
	Let $X$ be a planar continuum, and let $\seq U n$ be a sequence of distinct complementary domains of $X$. 
	If $X = \lim_{n \rightarrow \infty} \bd U_n$, then $X$ is either indecomposable or 2-indecomposable. 
\end{thm}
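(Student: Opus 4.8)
The plan is to prove the logically equivalent statement: \emph{a continuum $X$ possessing a sequence $\seq U n$ of distinct complementary domains with $\bd U_n \to X$ is never the essential union of three proper subcontinua}. This is equivalent to the theorem. Indeed, if $X = H_1\cup H_2\cup H_3$ with no $H_i$ contained in the union of the other two, then $X$ is already decomposable: some pairwise union $H_j\cup H_k$ is a continuum (were all three pairwise unions disconnected, all three pairwise intersections would be empty and $X$ would be disconnected), it is proper since it misses the nonempty set $H_i\sm(H_j\cup H_k)$, and $X=(H_j\cup H_k)\cup H_i$; so $X$ is neither indecomposable nor 2-indecomposable. Conversely, if $X$ is neither, then it is decomposable and, failing to be 2-indecomposable, is exactly such an essential union. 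So we may assume $X\subset\sphere$ and suppose, for a contradiction, that $\seq U n$ are distinct complementary domains with $\bd U_n\to X$ while $X=H_1\cup H_2\cup H_3$ essentially; fix this enumeration, with the convention that $\{i,j,k\}=\{1,2,3\}$ whenever these letters appear together.

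First I record the local picture. The sets $O_i:=H_i\sm(H_j\cup H_k)$ are nonempty, pairwise disjoint, and relatively open in $X$, and $X\sm O_i=H_j\cup H_k$. Fix $p_i\in O_i$ and pairwise disjoint closed disks $D_i\subset\sphere$ centered at $p_i$ small enough that $D_i\cap X\subseteq O_i$ (so $D_i$ is disjoint from $H_j\cup H_k$). Since each $\bd U_n$ is a subcontinuum of $X$ with $\bd U_n\to X$ in the Hausdorff metric, for all large $n$ the boundary $\bd U_n$ meets every $D_i\cap X$. Fix two such indices and write $U\neq U'$ for the corresponding complementary domains, so $\bd U$ and $\bd U'$ are subcontinua of $X$, each meeting all three $O_i$, and each is the full boundary of a complementary domain of $X$.

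Now comes the heart of the matter, a planar separation argument. Accessible points of $\bd U$ from $U$ are dense in $\bd U$, and $\bd U\cap D_i\cap X$ is a nonempty relatively open --- hence infinite --- subset of $\bd U$ (likewise for $\bd U'$), so I may choose distinct points $a_i\in\bd U\cap D_i\cap X$ accessible from $U$ and $a_i'\in\bd U'\cap D_i\cap X$ accessible from $U'$, together with arcs $\lambda_i$ from a basepoint $b\in U$ to $a_i$ with $\lambda_i\sm\{a_i\}\subseteq U$ and $\lambda_i'$ from $b'\in U'$ to $a_i'$ with $\lambda_i'\sm\{a_i'\}\subseteq U'$, the $\lambda_i$ disjoint except at $b$ and the $\lambda_i'$ disjoint except at $b'$. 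For each $i$, running $\lambda_i$, then an arc through the connected open set $D_i\sm X$ from near $a_i$ to near $a_i'$, then $\lambda_i'$, produces an arc $\gamma_i$ from $b\in U$ to $b'\in U'$ that meets $X$ only inside $O_i$ and therefore avoids $X\sm O_i=H_j\cup H_k$. Thus none of the three pairwise unions $H_j\cup H_k$ separates $U$ from $U'$ in $\sphere$, even though $X$ (their union) does, since $U$ and $U'$ are distinct components of $\sphere\sm X$. Passing to a subcontinuum $M\subseteq X$ irreducible with respect to separating $U$ from $U'$ --- so that $M$ is, classically, the common boundary of each of its complementary domains --- one sees moreover that $M$ meets every $O_i$, for if $M$ missed $O_i$ then $M\subseteq X\sm O_i=H_j\cup H_k$, which we have just seen does not separate $U$ from $U'$. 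From the existence of such an $M$ --- a continuum spread over all three exclusive regions $O_i$ while bounding each of its complementary domains --- one extracts a contradiction. The most efficient way to complete this is to verify that each $\bd U_n$ satisfies the hypotheses of the main theorem of \cite{Bur51}, whose content is, in effect, that a continuum which is the boundary of a complementary domain cannot be the Hausdorff limit of such boundaries while being the essential union of three pieces, and then quote that theorem; alternatively, one argues directly with $M$ and the separation theory of $\sphere$ (a Janiszewski / $\theta$-curve type argument, with Kuratowski's Theorem~\ref{thm:kuratowski} in the background).

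The main obstacle is precisely this final step. Because $X$ need not be locally connected anywhere, points of $O_i$ cannot be joined inside $X$, so the connecting arcs must be routed through the complementary domains, and the accessibility choices and the two triods $\lambda_1\cup\lambda_2\cup\lambda_3$ and $\lambda_1'\cup\lambda_2'\cup\lambda_3'$ must be made compatibly. More seriously, the passage from ``no pairwise union $H_j\cup H_k$ separates $U$ from $U'$'' to an actual contradiction is not formal: the intersections $H_i\cap H_j$ need not be connected, so Janiszewski's theorem does not apply directly to $X=(H_1\cup H_2)\cup H_3$, and squeezing the contradiction out of the irreducible separator $M$ (or out of the boundary structure of the $U_n$) is the delicate core of Burgess's argument. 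One must also treat separately the cases in which all three $H_i$ meet pairwise and in which some pair is disjoint, and be careful that nothing presupposes $X$ is the boundary of a single complementary domain, so that the result genuinely covers shielded continua. The reduction, the relative openness of the $O_i$, and the Hausdorff-convergence bookkeeping are routine.
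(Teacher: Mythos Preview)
Your proposal has a genuine gap at precisely the point you yourself flag as ``the main obstacle.'' You correctly reduce to showing that $X$ cannot be the essential union of three proper subcontinua, correctly set up the disks $D_i$, and correctly argue that no pairwise union $H_j\cup H_k$ separates $U$ from $U'$ in $\sphere$. But from that point on you do not actually derive a contradiction: you offer either to ``quote the main theorem of \cite{Bur51}'' --- which is circular, since Theorem~\ref{thm:burgess} is itself a corollary of that result --- or to run a ``Janiszewski / $\theta$-curve type argument'' with the irreducible separator $M$, without supplying one. The difficulty is real: the intersections $H_i\cap H_j$ need not be connected, so Janiszewski's theorem does not apply, and the bare fact that an irreducible separator $M$ meets every $O_i$ does not by itself yield a contradiction without further structural input. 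A proof that stops here is not a proof. (There is also a minor slip: $D_i\setminus X$ need not be connected. This is harmless, since your arc through $D_i$ only needs to avoid $H_j\cup H_k$, and $D_i$ is disjoint from $H_j\cup H_k$ by construction; but the sentence as written is incorrect.)

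For comparison, the paper does not prove Theorem~\ref{thm:burgess} directly --- it is cited from \cite{Bur51} --- but its proof of the surface generalization (Theorem~\ref{thm:burgessSurface}) specializes to a complete planar argument that avoids your obstacle entirely. Rather than using only two domains $U,U'$ and a separation dichotomy, the paper uses \emph{many} of the $U_n$: in each $U_n$ it draws an arc meeting all three disks, extends these to a cat's cradle from $d_1\in D_1$ to $d_3\in D_3$ through $D_2$, and then (via Lemma~\ref{lem:linearOrder}) extracts a linearly ordered family of chords in $D_2$. Because $X_2$ must separate consecutive chords in $D_2$, one can thread an arc in $X_2$ through points between the chords and assemble an embedded (minor of) $K_{3,3}$ in $\sphere$, which is impossible. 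The contradiction is thus combinatorial and self-contained, and never needs the delicate separation step you left open. If you want to salvage your two-domain approach, you would need to supply that missing separation argument in full; otherwise, the cat's-cradle/$K_{3,3}$ route is the clean way to finish.
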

Our Theorem~\ref{thm:burgessSurface} extends this theorem.  
It states that the above holds not only for continua in the plane, but for continua in all closed surfaces.

To state the next theorem, we need some terminology from \cite{CurMayTym07}. 
We slightly extend the definitions to apply to an arbitrary closed surface, whereas the work from \cite{CurMayTym07} applies only to $\sphere$. 
\begin{defn}[generalized crosscut]
	Let $S$ be a closed surface and $U$ a connected open subset with non-degenerate boundary. 
	A \emph{generalized crosscut} of $U$ is a homeomorphic copy $A \subset U$ of the open interval $(0,1)$ such that $\overline A \setminus A \subset \bd U$. 
\end{defn}
Generalized crosscuts are similar to crosscuts from prime end theory, except that the closure of a crosscut is by definition a compact arc.  
The closure of a generalized crosscut, however, need not be locally connected.
\begin{defn}[shadow]
	Let $A$ be a generalized crosscut of $U$.  
	A component $V$ of $U \setminus A$ is called a \emph{crosscut neighborhood} of $A$.  
	The \emph{shadow} of $A$ corresponding to $V$ is the set $\overline V \cap \bd U$.
\end{defn}
Observe that a generalized crosscut has either one or two shadows.
We can now state the \emph{double-pass condition}.
\begin{defn}[double-pass condition]
	Let $X \subset S$ be a continuum, and let $\seq U n$ be a sequence of complementary domains of $X$.  We say that $\seq U n$ satisfies the \emph{double-pass condition} when, for any choice of generalized crosscuts $K_n$ of $U_n$, there exists a sequence of shadows $S_n$ of $K_n$ so that $\lim_{n \rightarrow \infty} S_n = X$.
\end{defn}

The main result of \cite{CurMayTym07} is the following theorem.
\begin{thm}[\cite{CurMayTym07}]\label{thm:planchar}
	A continuum $X \subset \sphere$ is indecomposable if and only if it has a sequence $\seq U n$ of complementary domains satisfying the double-pass condition.
\end{thm}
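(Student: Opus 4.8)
The plan is to prove the two implications separately, with prime-end theory linking the complementary domains to the internal structure of $X$.

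For the implication ``indecomposable $\Rightarrow$ double-pass'', suppose $X$ is indecomposable. It is classical that then $\bd U = X$ for every complementary domain $U$, so $\cl U = \sphere$ and $\sphere = U \sqcup X$; fix one such $U$. I claim every generalized crosscut $K$ of $U$ has a shadow equal to $X$; granting this, the constant sequence $(U, U, \dots)$ satisfies the double-pass condition, since for arbitrary crosscuts $K_n$ we may take every $S_n = X$. To prove the claim, pass to the prime-end compactification via a Riemann map $\phi \colon \Disk \to U$: the crosscut $K$ pulls back to a crosscut of $\Disk$ cutting $\ucirc$ into two closed arcs $\cl{\alpha_1}, \cl{\alpha_2}$, and each shadow $T_i$ of $K$ contains the impression $I(P_\theta)$ of the prime end at every $\theta$ interior to $\alpha_i$. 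The argument then rests on a prime-end manifestation of indecomposability, in the spirit of Rutt~\cite{Rut35}: for indecomposable $X$, the set of $\theta \in \ucirc$ with $I(P_\theta) = X$ is dense. Hence each $\alpha_i$ meets this set, so $T_i \supseteq X$, and in fact both shadows equal $X$. Establishing this density statement (which I would deduce from the composant structure recalled in Section~\ref{sec:introduction}), together with the description of shadows in terms of impressions, is where the real work of this direction lies.

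For the implication ``double-pass $\Rightarrow$ indecomposable'', suppose $\seq U n$ satisfies the double-pass condition. Applying it to arbitrarily chosen crosscuts and retaining only the selected shadows --- which lie in $\bd U_n$ --- forces $\bd U_n \to X$, so by the planar Burgess theorem (Theorem~\ref{thm:burgess}; passing to a subsequence of distinct domains, or noting that a single repeated domain reduces to the same analysis) $X$ is indecomposable or $2$-indecomposable, and it remains to exclude the latter. Suppose then that $X = A \cup B$ is $2$-indecomposable with $A, B$ indecomposable (Burgess's uniqueness theorem). Fix $a \in A \sm B$, $b \in B \sm A$, and $\delta > 0$ with $\cl{B(a, 2\delta)} \cap B = \emptyset = \cl{B(b, 2\delta)} \cap A$. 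The plan is to choose, for each $n$, a generalized crosscut $K_n$ of $U_n$ one of whose shadows lies in $A$ (hence omits $B(b, \delta)$) and the other in $B$ (hence omits $B(a, \delta)$); then every shadow of $K_n$ has Hausdorff distance at least $\delta$ from $X$, so no choice of shadows converges to $X$, contradicting the double-pass condition. If $\bd U_n$ already omits one of $B(a, \delta)$, $B(b, \delta)$, any crosscut works; otherwise one works on the prime-end circle of $U_n$, where the sets $G_A = \{\theta : I(P_\theta) \subseteq A\}$ and $G_B = \{\theta : I(P_\theta) \subseteq B\}$ are closed, and what is needed is that $G_A \cup G_B = \ucirc$ and that $G_A, G_B$ do not interlace, so that $\ucirc$ is an arc inside $G_A$ joined along two points of $G_A \cap G_B$ to an arc inside $G_B$; a crosscut $K_n = \phi_n(\beta)$ along a chord $\beta$ between those two points then has the desired shadows.

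The main obstacle is exactly this last point: the covering and non-interlacing of $G_A$ and $G_B$, equivalently that the canonical decomposition $(A, B)$ is ``seen in cyclic order'' from each complementary domain, with no prime end whose impression straddles $A \sm B$ and $B \sm A$. I would attack this with the indecomposability of $A$ and $B$ together with Burgess's uniqueness theorem: a straddling impression is a subcontinuum of $X$ meeting both $A \sm B$ and $B \sm A$, and I would try to show such a subcontinuum must, after possibly interchanging $A$ and $B$, contain one of $A, B$ --- hence be far too large to be the impression of a prime end confined to a short arc of $\ucirc$ --- while the prime ends carrying such large impressions are confined to the closure of a single arc. Should this order structure fail for some pathological embedding, the fallback is to relax the target and require only that each shadow of $K_n$ omit a fixed $\delta$-ball about some (possibly shadow-dependent) point of $X$, which still keeps every shadow bounded away from $X$; one would then need only to control the location of straddling impressions near $A \cap B$.
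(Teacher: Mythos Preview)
The paper does not prove Theorem~\ref{thm:planchar}; it is quoted from \cite{CurMayTym07} as prior work, so there is no proof here to compare against directly. The nearest argument in the paper is the proof of the surface extension, Theorem~\ref{thm:surfaceChar}, and your outline diverges from that in both directions.

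For the forward implication the paper bypasses prime ends entirely: Theorem~\ref{thm:convergeUp} produces domains with $\bd U_n \to X$, the two shadows of any $K_n$ are subcontinua whose union is $\bd U_n$, and a Hausdorff-limit argument together with indecomposability forces the smaller-distance shadow to converge to $X$. Your route through density of full-impression prime ends is heavier and leaves that density statement unproven; indeed your own simpler observation---that when $\bd U=X$ the two shadows of any generalized crosscut union to $X$, so one of them must equal $X$---already finishes this direction without any prime-end machinery.

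For the reverse implication your plan has a genuine gap. The paper never reduces to the $2$-indecomposable case; it assumes an arbitrary decomposition $X=X_1\cup X_2$, uses Lemma~\ref{lem:unlinked} to force the crosscut-endpoint sets coming from $\bd D_1$ and $\bd D_2$ to interlink on $\partial\Disk$, and then builds a bouquet of simple closed curves that separates the surface while missing $X_1$, contradicting connectedness of $X_1$. Your alternative hinges on the claim that any subcontinuum of a $2$-indecomposable $X=A\cup B$ meeting both $A\setminus B$ and $B\setminus A$ must contain $A$ or $B$. This is false: take $A,B$ indecomposable with $A\cap B=\{p\}$ (one checks by a short Baire argument that $X$ is then $2$-indecomposable), and let $L_A\subsetneq A$, $L_B\subsetneq B$ be proper subcontinua through $p$ meeting $A\setminus\{p\}$ and $B\setminus\{p\}$ respectively; then $K=L_A\cup L_B$ straddles yet contains neither $A$ nor $B$. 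So impressions can straddle, $G_A\cup G_B$ need not cover $\partial\Disk$, and the non-interlacing claim is unsupported; your fallback is not concrete enough to close this. Separately, your appeal to Burgess's theorem requires infinitely many \emph{distinct} $U_n$, which the double-pass hypothesis alone does not supply in the unshielded planar situation where a single domain with $\bd U=X$ may repeat.
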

We extend this theorem in Section~\ref{sec:charThm}.  
Specifically, we prove that a \emph{shielded} continuum in a surface is indecomposable if and only if it has a sequence of complementary domains satisfying the double-pass condition.

\section{An Extension of a Theorem of Burgess}\label{sec:burgess}

In this section, we extend Theorem~\ref{thm:burgess} for planar continua to continua in arbitrary closed surfaces.  
Namely, we prove that a continuum in a surface which is the limit of boundaries of \emph{distinct} complementary domains is either indecomposable or 2-indecomposable.

\subsection{Graphs and Cat's Cradles}

The following lemma gives us a convenient way to gain information about the genus of a particular surface. 

%begin sug1
\begin{defn}
  The \emph{complete bipartite graph} $K_{m,n}$ is the graph with $m+n$ vertices, $\{a_1, \ldots, a_m, b_1, \ldots, b_n\}$, and $mn$ edges joining every vertex in $\{a_1, \ldots, a_m\}$ to every vertex in $\{b_1, \ldots, a_n\}$.
\end{defn}
%end sug1
	 
\begin{lem} \label{lem:bipartite} 
	If $S$ is a closed surface containing an embedding of the graph $K_{3, 4g-1}$, then the genus of $S$ is at least $g$. 
\end{lem}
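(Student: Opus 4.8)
The plan is to reduce to a cellular embedding and then run the standard Euler‑formula count, exploiting that $K_{3,4g-1}$ is bipartite (so every face has at least four sides) to force the Euler characteristic of $S$ to be very negative.

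\emph{Reduction to a cellular embedding.} Write $G \subset S$ for the embedded copy of $K_{3,4g-1}$; it has $V = 4g+2$ vertices and $E = 12g-3$ edges. Let $N$ be a closed regular neighborhood of $G$ in $S$, a compact surface with boundary that deformation retracts onto $G$, so $\chi(N) = \chi(G) = V - E$, with $c$ boundary circles; note $c \ge 1$, since $N = S$ would force a closed surface to deformation retract onto a graph. Capping off each boundary circle of $N$ with a disk produces a closed surface $\hat N$ in which $G$ is \emph{cellularly} embedded — because $G$ has no leaves, $N \sm G$ is a disjoint union of half‑open annuli, so each face of $\hat N \sm G$ is such an annulus with a capping disk attached, hence a disk — and $\hat N$ is orientable whenever $S$ is. I would then check $\chi(S) \le \chi(\hat N)$: with $R = \cl{S \sm \Int N}$, inclusion–exclusion along the circles $\bd N$ (which have Euler characteristic $0$) gives $\chi(S) = \chi(N) + \chi(R)$ and $\chi(\hat N) = \chi(N) + c$, and since $R$ is a compact surface with $c \ge 1$ boundary circles, $\chi(R) \le 2 - c \le c$.

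\emph{Euler count and genus.} For the cellular embedding $G \subset \hat N$ with $F$ faces, Euler's formula gives $\chi(\hat N) = V - E + F$. Since $K_{3,4g-1}$ is simple and bipartite, no face is bounded by a closed walk of length $1$, $2$, or $3$ (a triangle is excluded by bipartiteness, and shorter walks would require a loop or force $G$ to be a single edge), so every facial walk has length at least $4$; as the facial walks traverse each edge exactly twice in total, $4F \le 2E$, i.e.\ $F \le E/2$. Therefore
\[
  \chi(S) \;\le\; \chi(\hat N) \;=\; V - E + F \;\le\; V - \tfrac12 E \;=\; (4g+2) - \tfrac12(12g-3) \;=\; \tfrac72 - 2g,
\]
so $\chi(S) \le 3 - 2g$ because $\chi(S)$ is an integer. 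If $S$ is orientable of genus $h$ then $2 - 2h \le 3 - 2g$, hence $h \ge g$; if $S$ is non‑orientable with crosscap number $k$ then $2 - k \le 3 - 2g$, hence $k \ge 2g-1 \ge g$ for $g \ge 1$ (the statement being vacuous for $g = 0$). In every case the genus of $S$ is at least $g$.

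The step I expect to be the main obstacle is the reduction: one has to argue carefully that passing to the regular neighborhood and capping off genuinely yields a cellular embedding in a surface of genus no larger than that of $S$, and one has to be careful about facial walks, which in a higher‑genus embedding need not be cycles (this is why the lower bound on their length is phrased for closed walks rather than cycles, and uses bipartiteness to rule out triangular faces). Once the reduction is in hand the remaining computation is routine; it is worth noting that the bound is sharp, since by Ringel's formula the orientable genus of $K_{3,4g-1}$ is exactly $g$.
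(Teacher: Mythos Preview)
Your argument is essentially correct but takes a genuinely different route from the paper.  The paper's proof is a one-line citation: it invokes the Ringel formula $\gamma(K_{m,n}) = \lceil (m-2)(n-2)/4 \rceil$ from Harary's text and substitutes $m=3$, $n=4g-1$ to get $\lceil g - 3/4\rceil = g$.  You instead reprove the lower-bound half of that formula directly, by passing to a cellular embedding and running the Euler count with the bipartite girth bound $4F \le 2E$.  The trade-off is that the paper's proof is much shorter, while yours is self-contained (no black-box appeal to Ringel's theorem) and treats the non-orientable case explicitly rather than leaving it implicit in the citation.  One small repair is needed in your reduction step: the inequality $\chi(R) \le 2 - c$ assumes $R$ is connected, which it need not be (if $R$ were $c$ disjoint disks then $\chi(R) = c$, not $2-c$).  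What you actually need, and what does hold, is the weaker bound $\chi(R) \le c$: each component of $R$ has at least one boundary circle and hence Euler characteristic at most $1$, and there are at most $c$ components; this still yields $\chi(S) = \chi(N) + \chi(R) \le \chi(N) + c = \chi(\hat N)$ as required.
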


\begin{proof}
	According to \cite{Frank1969}, the minimal genus $g$ of any surface in which the graph $K_{m, n}$ can be embedded is $\left \lceil \frac{(m-2)(n-2)}{4} \right \rceil$.  (Here, $\lceil x \rceil$ denotes the smallest integer $n$ such that $x \le n$.)  Substituting $m = 3$ and $n = 4g-1$ gives that the minimal genus for a surface to contain a copy of $K_{3,4g-1}$ is equal to $\left \lceil g - \frac 3 4 \right \rceil = g$. 
\end{proof}

For the remainder of this section, let $S$ denote a particular closed surface of genus $g$.  
Below we will use the following convenient notation:  For an arc $A$, let $\Int(A)$ denote the open sub-arc of $A$ joining the endpoints of $A$.
\begin{defn}[cat's cradle]
	Let $d_1, d_3 \in S$ be distinct points. A collection of arcs $\{A_\alpha\}_{\alpha \in J}$ is called a \emph{cat's cradle (between $d_1$ and $d_3$)} if 
	\begin{enumerate}
		\item $d_1$ and $d_3$ are the endpoints of each $A_\alpha$ and 
		\item for distinct $\alpha, \beta \in J$, $\Int(A_\alpha) \cap \Int(A_\beta) = \emptyset$. 
	\end{enumerate}
	In addition, if each $A_\alpha$ intersects a set $D_2$ disjoint from $\{d_1, d_3\}$, then $\{A_\alpha\}_{\alpha \in J}$ is a cat's cradle \emph{through $D_2$}. 
\end{defn}

We now prove a general fact about a cat's cradle through a closed disk in a surface.  
On the face of it, there are many ways in which the arcs of a cat's cradle could be arranged.
It is certainly possible for the arcs to intersect the disk $D_2$ in parallel chords (in a way that one might call \emph{linearly ordered}, defined precisely below).  
On the other hand, the arcs could be arranged in such a way that no chord in the intersection divides the disk between any other two chords in the intersection.  
The following lemma shows that this second possibility does not occur often in a surface of finite genus.

\begin{lem} \label{lem:meetingComponent}
	Let $d_1, d_3 \in S$, and let $D_2 \subset S$ be a closed disk.  Suppose $\finseq{A}{n}{4g+3}$ is a cat's cradle between $d_1$ and $d_3$ through $D_2$, where $g$ is the genus of $S$. Then no component of $D_2 \setminus \bigcup_{n=1}^{4g+3} A_n$ can have closure which meets every element of $\finseq{A}{n}{4g+3}$. 
\end{lem}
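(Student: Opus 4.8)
The plan is to argue by contradiction via a forbidden‑subgraph argument. Suppose some component $C$ of $D_2 \sm \bigcup_{n=1}^{4g+3} A_n$ had closure meeting every $A_n$. I will build an embedding of the complete bipartite graph $K_{3,\,4g+3} = K_{3,\,4(g+1)-1}$ in $S$, which by Lemma~\ref{lem:bipartite} (applied with $g+1$ in place of $g$) forces the genus of $S$ to be at least $g+1$, contradicting the standing assumption that it is $g$. One part of the graph will be $\{d_1, d_3, p\}$, where $p$ lies in the relevant complementary region of $G := \bigcup_{n=1}^{4g+3} A_n$ (a compact subset of $S$, since each $A_n$ is a compact arc); the other part will be a single interior point $q_n$ of each $A_n$; and the three edges at $q_n$ will be the two sub‑arcs into which $q_n$ splits $A_n$, together with an arc from $p$ to $q_n$ that runs, apart from its endpoint, inside the complementary region.

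First I would pass from $C$ to the component $C'$ of $S \sm G$ containing it; this is a connected open subsurface of $S$, and since $\overline{C'} \supseteq \overline{C}$, the closure $\overline{C'}$ still meets every $A_n$. As $D_2$ is compact and disjoint from $\{d_1,d_3\}$ we have $\overline{C} \subseteq D_2$, hence $\overline{C} \cap A_n \subseteq \Int(A_n)$; this set is nonempty, so $\overline{C'} \cap \Int(A_n) \neq \emptyset$, and I pick $q_n$ in it. The points $q_1,\dots,q_{4g+3}$ are distinct because the open arcs $\Int(A_n)$ are pairwise disjoint, and it follows from the definition of a cat's cradle that distinct arcs $A_m, A_n$ meet only in $\{d_1,d_3\}$.

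The heart of the argument is to access each $A_n$ cleanly from $C'$. Since $q_n \in \Int(A_n)$, the point $q_n$ lies on no $A_m$ with $m \ne n$; invoking the classical fact that an arc embedded in a surface is tame, $q_n$ therefore has arbitrarily small open disk neighborhoods $B_n$ with $B_n \cap G = B_n \cap A_n$ a single arc through $q_n$ splitting $B_n$ into two open half‑disks. Choosing the $B_n$ pairwise disjoint, and using $q_n \in \overline{C'}$, the neighborhood $B_n$ meets $C'$, necessarily inside one of the half‑disks, $H_n$; picking $p_n \in C' \cap H_n$ and an arc $\gamma_n$ in $H_n \cup \{q_n\}$ from $p_n$ to $q_n$, the set $\gamma_n \sm \{q_n\}$ is a connected subset of $S \sm G$ meeting $C'$, so it lies in $C'$, and the $\gamma_n$ are pairwise disjoint. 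Since $C'$ is a connected surface, a routine general‑position argument then joins a basepoint $p \in C'$ lying off $\bigcup_n \gamma_n$ to the points $p_n$ by arcs $\delta_n \subseteq C'$ that pairwise meet only in $p$, meet $\gamma_n$ only in $p_n$, and are disjoint from $\gamma_m$ for $m \ne n$. Concatenating, $B_n := \delta_n \cup \gamma_n$ is an arc from $p$ to $q_n$ with $B_n \cap G = \{q_n\}$ and $B_n \cap B_m = \{p\}$ for $m \ne n$.

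It then remains to verify that the $3(4g+3)$ arcs---the two sub‑arcs of each $A_n$ cut at $q_n$, together with the arcs $B_n$---realize an embedded $K_{3,\,4g+3}$ with parts $\{d_1,d_3,p\}$ and $\{q_1,\dots,q_{4g+3}\}$; this is routine, using that distinct $A_n$ meet only in $\{d_1,d_3\}$, that $q_n \notin A_m$ for $m \ne n$, and that each $B_n$ meets $G$ only at $q_n$. Lemma~\ref{lem:bipartite} then yields the contradiction. I expect the only real obstacle to be the clean‑access step of the third paragraph---converting the purely topological hypothesis ``$\overline{C}$ meets $A_n$'' into an honest arc inside $C'$ that lands on $A_n$; everything else is either bookkeeping about the cat's cradle or standard general position in a surface. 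It is precisely there that tameness of arcs in a $2$‑manifold, in the form of the two‑half‑disk local picture around $q_n$, is used.
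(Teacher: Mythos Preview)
Your proposal is correct and follows essentially the same route as the paper: assume such a component exists, choose a basepoint inside it, run disjoint arcs from that basepoint out to one interior point on each $A_n$, and observe that together with the $A_n$ this yields an embedded $K_{3,4g+3}$, contradicting Lemma~\ref{lem:bipartite}. The paper's write-up is terser (it works directly in the disk $D_2$ rather than passing to the ambient component $C'$ of $S\setminus G$, and simply asserts the existence of the access arcs $A'_n$ without the tameness/general-position discussion you supply), but the idea is the same; do note that you have reused the symbol $B_n$ for both the disk neighborhoods and the final arcs, which you will want to fix.
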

\begin{proof}
  Suppose some component $V$ of $D_2 \setminus \bigcup_{n=1}^{4g+3} A_n$ has closure which meets every arc $A_n$. Designate a point $d_2 \in V$, and choose arcs $\finseq{A'}{n}{4g+3}$ satisfying the following for all different indices $m$ and $n$:
  \begin{enumerate}
    \item $A'_n$ joins $d_2$ to a point of $A_n$, with no proper sub-arc of $A'_n$ doing so;
    \item $A'_n \cap A'_m = \{d_2\}$; and
    \item $\Int(A'_n) \subset V$ for each $n$.
  \end{enumerate} 
  The graph $\bigcup_{n=1}^{4g+3} (A_n \cup A_n')$ is then homeomorphic to the graph $K_{3, 4g+3}$, being the complete bipartite graph between the sets $\{d_1, d_2, d_3\}$ and $\bigcup_{n=1}^{4g+3} (A_n \cap A_n')$.  Lemma~\ref{lem:bipartite} indicates that the genus of $S$ is at least $g+1$, contradicting the assumption that it is $g$. 
\end{proof}

Lemma~\ref{lem:linearOrder} will be used in the proof of Theorem~\ref{thm:burgessSurface} to build interesting graphs in $S$.  To state it, we need a definition.
\begin{defn}[linearly ordered]
	Let $ \indexed{A'}{\alpha}{J}$ be an ordered sequence of pairwise disjoint compact arcs in a closed disk $D_2$, irreducible with respect to intersecting $\bd D_2$ twice. 
	We say that $ \indexed{A'}{\alpha}{J}$ is \emph{linearly ordered} if, for $\alpha < \beta < \gamma$ in $J$, $A'_\alpha$ is separated from $A'_\gamma$ in $D_2$ by $A'_\beta$. 
	If $J$ is finite, we call a component of $D_2 \setminus \bigcup_{n \in J} A'_n$ an \emph{end component} if its closure meets only one element of $ \indexed {A'} n J$. 
\end{defn}

In what follows a \emph{chord} of a disk $D_2$ is an arc in $D_2$ which intersects $\bd D_2$ exactly at its endpoints.

\begin{lem} \label{lem:linearOrder} 
	Let $\seq A n$ be a cat's cradle from $d_1$ to $d_3$ through a closed disk $D_2$, and suppose that $A_n \cap \bd D_2$ is finite for each $n$. 
	Then there is a subsequence $\subseq{A}{n}{i}$ and a collection of sub-arcs $\subseq{A'}{n}{i}$, $A_{n_i}' \subset A_{n_i} \cap D_2$, so that $\subseq{A'}{n}{i}$ is linearly ordered in $D_2$. 
\end{lem}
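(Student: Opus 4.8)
The plan is to recast the way the arcs $A_n$ meet $D_2$ in terms of a graph and then invoke K\"onig's Lemma. First I discard any $A_n$ that misses $\Int D_2$ (for the cradles arising in the application this does not occur). Since $A_n\cap\bd D_2$ is finite and $d_1,d_3\notin D_2$, each $A_n\cap D_2$ is then a finite union of chords of $D_2$ together with finitely many isolated points; let $L$ be the union of all of these chords, taken over all $n$. Any two chords of $L$ are disjoint — two from the same $A_n$ because $A_n$ is an arc, two from different arcs because distinct arcs of the cradle meet only in $\{d_1,d_3\}$ — and $L$ is infinite. Every chord separates $D_2$, so I form the graph $T$ whose vertices are the components of $D_2\sm L$ and whose edges are the chords of $L$, each chord joining the two components it separates.

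Suppose some component $W$ of $D_2\sm L$ has infinitely many chords of $L$ on its boundary. As each $A_n$ contributes only finitely many chords to $L$, these boundary chords lie in infinitely many distinct arcs, so I may select $4g+3$ of them, say $f_1\subset A_{m_1},\dots,f_{4g+3}\subset A_{m_{4g+3}}$ with the $m_\ell$ distinct; then $\{A_{m_1},\dots,A_{m_{4g+3}}\}$ is a cat's cradle between $d_1$ and $d_3$ through $D_2$. Let $V$ be the component of $D_2\sm\bigcup_{\ell=1}^{4g+3}A_{m_\ell}$ containing $W$ (note that $W$, minus finitely many points, lies in this complement). For each $\ell$ the chord $f_\ell\subset A_{m_\ell}$ lies on $\bd W\subseteq\cl V$ yet is disjoint from $V$, so $f_\ell\subseteq\bd V$ and hence $\cl V\cap A_{m_\ell}\ne\emptyset$; thus $\cl V$ meets all $4g+3$ of the arcs, contradicting Lemma~\ref{lem:meetingComponent}. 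Therefore no component of $D_2\sm L$ meets infinitely many chords, i.e.\ $T$ is locally finite; $T$ is also connected and cycle-free, being built from the chords of $L$, which separate $D_2$ (a point-set matter addressed below). Since $L$ is infinite, $T$ is an infinite, locally finite, connected graph, so by K\"onig's Lemma it contains a ray $W_0,W_1,W_2,\dots$, with $W_{j-1}$ and $W_j$ joined by the chord $c_j$.

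Following the ray, $W_0,\dots,W_{j-1}$ lie on one side of $c_j$ in $D_2$ and $W_j,W_{j+1},\dots$ on the other, so $c_1,\dots,c_{j-1}$ lie in one component of $D_2\sm c_j$ and $c_{j+1},c_{j+2},\dots$ in the other; hence, whenever $i<j<k$, the chord $c_j$ separates $c_i$ from $c_k$, i.e.\ $\seq c j$ is linearly ordered in $D_2$. Once more these chords come from infinitely many distinct arcs, so there are indices $j_1<j_2<\cdots$ and distinct arcs $A_{n_1},A_{n_2},\dots$ with $n_1<n_2<\cdots$ and $c_{j_r}\subset A_{n_r}$; the subsequence $(c_{j_r})_{r}$, taken in the order it inherits from the ray, remains linearly ordered, so setting $A'_{n_r}:=c_{j_r}$ proves the lemma.

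The only step carrying real content is the one invoking Lemma~\ref{lem:meetingComponent}: it is there that the finiteness of the genus of $S$ is needed, although given that lemma the argument is short. The part demanding the most care is the topology of $T$ — one must check that the components of $D_2\sm L$ together with the chords of $L$ form a connected, cycle-free graph, which is slightly delicate because $L$ need not be closed (endpoints of chords may accumulate on $\bd D_2$). This reduces to verifying that any two complementary components are separated by only finitely many chords, which one obtains from the fact — established above — that no component meets infinitely many chords.
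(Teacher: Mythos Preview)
Your K\"onig's Lemma approach is appealing, but the graph $T$ need not be connected, and the justification you offer for connectedness is false. Consider the sphere ($g=0$) with $d_1,d_3$ the poles, $D_2$ a small round disk centred on the equator, and $\seq A n$ meridian arcs chosen so that their chords in $D_2$ are exactly the vertical segments $\{x=q\}\cap D_2$ as $q$ ranges over all dyadic rationals in $(-1,1)$. This is a cat's cradle through $D_2$ satisfying every hypothesis, and Lemma~\ref{lem:meetingComponent} holds trivially (any three meridians cut $D_2$ into four strips, each meeting at most two of them). Here $L$ is the union of the dyadic vertical chords, so each component of $D_2\sm L$ is a single vertical segment $\{x=a\}\cap D_2$ with $a$ non-dyadic; such a segment has empty interior, its boundary in $D_2$ is itself, and \emph{no chord of $L$ lies on it}. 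Hence $T$ has no edges whatsoever, and K\"onig's Lemma gives nothing. Even in the tamer configuration with chords only at $x=0$ and $x=1/n$ ($n\ge 1$), the left half-disk and any right-hand strip are separated by infinitely many chords although every vertex of $T$ has degree at most~$2$; so the implication ``no component meets infinitely many chords $\Rightarrow$ any two components are separated by only finitely many chords'' that you invoke in your final sentence is simply wrong.

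The paper avoids this by never forming the full complement $D_2\sm L$. It works with finitely many arcs at a time: in Case~1 it builds the linearly ordered sequence inductively, using Lemma~\ref{lem:meetingComponent} at each stage to bound the number of ``neighbouring'' arcs and then stepping into an end component that still meets infinitely many of the remaining $A_n$; Case~2 handles the situation where some arc has infinitely many others nesting toward it. Your idea can be rescued in the same spirit --- for instance, apply K\"onig's Lemma to the inverse system of \emph{finite} trees $T_N$ dual to the chords of $A_1,\dots,A_N$, where Lemma~\ref{lem:meetingComponent} bounds how many vertices of $T_{N+1}$ can refine a given vertex of $T_N$ --- but as written, collapsing everything to $D_2\sm L$ in one stroke throws away precisely the finiteness you need.
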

\begin{proof}
	First, using Lemma~\ref{lem:meetingComponent}, notice that only finitely many components of $D_2 \cap \bigcup_{n=1}^\infty A_n$ do not meet $D_2$ in its interior.  
  Hence, by passing to a subsequence, we may assume that $D_2 \setminus A_i$ is not connected for any $i \in \nat$.  
  In particular, $A_i \cap D_2$ contains a non-degenerate chord for every $i \in \nat$ and therefore separates $D_2$.

	There are two cases.  
	
	\caseface{Case 1.}  One case is that, for any element $A_{\alpha}$, there is an element $A_{\beta}$ such that no element $A_{\gamma}$ separates $A_{\alpha}$ from $A_{\beta}$ in $D_2$.
	We will build by induction an increasing sequence $(n_k)_{k=1}^\infty$ and a corresponding sequence of chords $(A'_{n_k})_{k=1}^\infty$ with $A'_{n_k} \subset A_{n_k} \cap D_2$ such that, for all $k \ge 1$, the following two conditions are met:
	\begin{enumerate}
		\item \label{item:linearlyOrdered} The chords $A'_{n_1}, \ldots, A'_{n_k}$ are linearly ordered; and
		\item \label{item:endComponent} some end component of $D_2 \setminus \bigcup_{i=1}^k A'_{n_i}$ bounded by $A'_{n_k}$ intersects infinitely many elements of $(A_n)_{n > n_k}$.  (Notice that there is a unique such end component if $k \neq 1$.)
	\end{enumerate}
	
	For the base case of our induction, set $n_1 = 1$, and let $A'_{n_1} \subset A_{n_1}$ be any non-degenerate chord in $D_2$.
	There are then two components to $D_2 \setminus A'_{n_1}$, each an end component of $D_2 \setminus A'_{n_1}$. 
	Infinitely many elements of the collection $(A_n)_{n \ge n_1}$ must intersect one of them.  
	We have therefore satisfied the requirements for $k=1$.
	
	Now suppose that chords $A'_{n_1}$, \ldots, $A'_{n_k}$ have been found which satisfy conditions \ref{item:linearlyOrdered} and \ref{item:endComponent}.
  Consider the collection $\{A_{N_1}, \ldots, A_{N_m}\}$ of arcs which are not separated from $A'_{n_k}$ by any other $A_n$.
	(There may only be finitely many by Lemma~\ref{lem:meetingComponent}.)
	Let $V$ denote the end component of $D_2 \setminus \bigcup_{i=1}^k A'_{n_i}$ meeting $A'_{n_k}$
	
	Because each $A_{N_i}$ intersects $\bd D_2$ in finitely many points, we see that $\bigcup_{i=1}^m A_{N_i}$ divides $V$ into finitely many components. 
  By choice of $N_1, \ldots, N_m$, no member of $(A_n)_{n > \max(N_1, \ldots, N_m)}$ intersects the component of $V \setminus \bigcup_{i=1}^m A_{N_i}$ which meets $A'_{n_k}$.
	Therefore, infinitely many members must intersect another component $W$ of $V \setminus \bigcup_{i=1}^m A_{N_i}$.
	Let $A'_{n_{k+1}}$ be a minimal sub-arc of $\bigcup_{i=1}^m A_{N_i}$ which separates $W$ from $A'_k$.  
	Then $W$ is contained in the end component of $D_2 \setminus \bigcup_{i=1}^{k+1} A_{n_i}'$ meeting $A_{n_{k+1}}'$.
	That end component meets infinitely many members of $(A_n)_{n > n_{k+1}}$, so we have extended our linearly ordered collection to $k+1$ elements.
	
	By induction, we have a sequence of chords $(A'_{n_i})_{i=1}^\infty$.  
	Since each finite subsequence $(A_{n_i})_{i=1}^k$ is linearly ordered, $(A_{n_i})_{i=1}^\infty$ is linearly ordered.
	
	\caseface{Case 2.}  There exists $A_\alpha$ such that any $A_\beta$ is separated from $A_\alpha$ in $D_2$ by some $A_\gamma$.
	
	Choose $n_1 > \alpha$, and let $A'_{n_1} \subset A_{n_1}$ be any non-degenerate chord.
	Find a sequence $(A_{n_i})_{i=1}^\infty$ such that $A_{n_i}$ is separated from $A'_{\alpha}$ by $A_{n_{i+1}}$.
  Recall that $D_2$, as a two-dimensional disk, is \emph{unicoherent}, which implies that any closed set which separates two points of $D_2$ also has a component which does so.
  Therefore, a chord $A'_{n_{i+1}} \subset A_{n_{i+1}}$ also separates $A_{n_i}$ from $A'_{\alpha}$.
	This process inductively yields the ordered sequence $(A_{n_i})_{i=2}^\infty$, which is evidently linearly ordered.
\end{proof}

\subsection{Burgess's Theorem for Surfaces}

Now we move to the specific setting of a continuum with infinitely many complementary domains. 
\begin{thm}[Burgess's Theorem for Surfaces] \label{thm:burgessSurface} 
	Let $X$ be a subcontinuum of a closed surface $S$, and let $\seq U n$ be a sequence of distinct complementary domains of $X$.
	If $X = \lim_{n \rightarrow \infty} \bd U_n$, then $X$ is either indecomposable or 2-indecomposable.
\end{thm}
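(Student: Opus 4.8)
The plan is to prove the contrapositive in the form: if $X$ is the essential union of three proper subcontinua, then $X$ is not the limit of the boundaries of distinct complementary domains. Being the essential union of three proper subcontinua already forces $X$ to be decomposable, so this covers exactly the case that $X$ is neither indecomposable nor 2-indecomposable. So suppose $X = X_1 \cup X_2 \cup X_3$ with no $X_i$ contained in the union of the other two, and for each $i$ put $O_i = X_i \setminus \bigcup_{j \ne i} X_j$, a nonempty set open in $X$; fix $d_i \in O_i$. Since $d_2 \notin X_1 \cup X_3$ and $X_2$ is nondegenerate --- otherwise $d_2$ would be isolated in $X$ --- I may fix a closed disk $D_2 \subset S$ with $d_2 \in \Int D_2$, with $D_2 \cap (X_1 \cup X_3) = \emptyset$, with $d_1, d_3 \notin D_2$, and with $X_2 \not\subseteq D_2$. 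Then $D_2 \cap X = D_2 \cap X_2$, and by the boundary bumping theorem the component $Y$ of $X_2 \cap D_2$ containing $d_2$ is a nondegenerate subcontinuum meeting $\bd D_2$; in particular $Y$ is disjoint from every complementary domain of $X$.

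Next I build a cat's cradle between $d_1$ and $d_3$. Because $\bd U_n \to X$, for all large $n$ there are points $q_1^n, q_2^n, q_3^n \in U_n$ with $d(q_i^n, d_i) \to 0$; as $U_n$ is open and connected in a surface it is arcwise connected, so I join $q_1^n$ to $q_2^n$ and $q_2^n$ to $q_3^n$ inside $U_n$ and extract an arc in $U_n$ from $q_1^n$ through $q_2^n$ to $q_3^n$. Appending two arcs of diameter tending to $0$ joining $d_1$ to $q_1^n$ and $q_3^n$ to $d_3$ and again extracting an arc gives an arc $A_n$ from $d_1$ to $d_3$ that meets $\Int D_2$ (through $q_2^n$, eventually) and has $A_n \cap D_2 \subset U_n$. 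A general-position perturbation of $A_n$ inside $U_n$ near $\bd D_2$ makes $A_n \cap \bd D_2$ finite, and a rerouting near $d_1$ and $d_3$ that fans the short appended arcs into disjoint spokes makes $\seq A n$ a genuine cat's cradle through $D_2$. Lemma~\ref{lem:linearOrder} then supplies a subsequence $\subseq A n i$ and chords $A'_{n_i} \subset A_{n_i} \cap D_2 \subset U_{n_i}$ that are linearly ordered in $D_2$; the domains $U_{n_i}$ are pairwise distinct, hence pairwise disjoint.

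It remains to convert this into an embedded copy of $K_{3, 4g+3}$, which is impossible in $S$ by Lemma~\ref{lem:bipartite} since $4g+3 = 4(g+1)-1$ and $S$ has genus $g$. Passing to a further subsequence, assume $A'_{n_i} \to C_\infty$ in the Hausdorff metric. Each point of $C_\infty$ is a limit of points lying in the distinct domains $U_{n_i}$, so such a point cannot lie in $S \setminus X$ (that would trap infinitely many $U_{n_i}$ in one complementary domain); hence $C_\infty \subset X \cap D_2 = X_2 \cap D_2$, and $C_\infty$ is a nondegenerate continuum meeting $\bd D_2$. Now select $4g+3$ of the arcs whose chords lie Hausdorff-close to $C_\infty$; near $d_2$ each such arc lies in its own domain $U_{n_i}$, and distinctness of the $U_{n_i}$ puts these arcs in distinct components of the complement of $X$ near $d_2$. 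Using this separation, together with the fact that the domains accumulate on all of $Y$, I join each of the $4g+3$ arcs to $X_2$ by a short arc travelling through its own complementary domain, with the short arcs pairwise interior-disjoint and meeting $\bigcup A_{n_i}$ only at their attaching points. Collapsing the connected set $Y \cup C_\infty$ --- which is confined to the disk $D_2$, hence collapsible within $S$ --- then exhibits $K_{3,4g+3}$ with vertex classes $\{d_1\}$, $\{d_3\}$, and the collapsed point, together with the $4g+3$ attaching points, contradicting Lemma~\ref{lem:bipartite}.

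I expect the last step to be the real obstacle. The cat's cradle construction and the invocation of Lemma~\ref{lem:linearOrder} are essentially bookkeeping, and the reduction to ``not the essential union of three proper subcontinua'' is formal; but producing an honest embedding of $K_{3,4g+3}$ is delicate, because a complementary domain need not reach any prescribed point of $X_2$. Consequently the attaching point on $Y$, and possibly the subsequence, must be chosen with attention to which boundary points are accessible from the relevant domains --- most likely via one more limiting argument along $C_\infty$ --- and one must check carefully that the short connecting arcs can be kept disjoint, for which the separation of the $U_{n_i}$ into distinct components of $S \setminus X$ near $d_2$ is the key input.
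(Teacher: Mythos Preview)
Your overall strategy coincides with the paper's: assume $X = X_1 \cup X_2 \cup X_3$ is an essential union, build a cat's cradle between points of $X_1$ and $X_3$ through a disk meeting only $X_2$, invoke Lemma~\ref{lem:linearOrder}, and then extract $K_{3,4g+3}$ to contradict Lemma~\ref{lem:bipartite}.  The divergence, and the genuine gap, is in the final step.

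You try to connect each chord $A'_{n_i}$ to the set $Y \cup C_\infty$ by a short arc travelling through its own complementary domain $U_{n_i}$, and then collapse $Y \cup C_\infty$.  Two problems: first, $Y \cup C_\infty$ need not be connected---$Y$ is the component of $X_2 \cap D_2$ through $d_2$, while $C_\infty$ is merely some subcontinuum of $X_2 \cap D_2$, and these need not meet---so the collapsed object is not a single vertex.  Second, and more seriously, $\bd U_{n_i}$ can be a proper subcontinuum of $X$ that misses $C_\infty$ and $Y$ entirely, so there is no reason an arc in $U_{n_i}$ can land on either set; you flag this accessibility issue yourself, and it is not repairable by a further limiting argument along $C_\infty$.

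The paper avoids all of this by never trying to reach $X_2$ through the complementary domains.  It uses \emph{three} disks $D_1,D_2,D_3$ (one per $X_i$), so that the extended cat's cradle $\bigcup \tilde A_{n_i}$ is entirely disjoint from $X_2$.  Since the chords $A'_{n_i}\subset U_{n_i}$ lie in distinct complementary domains, $X$ separates consecutive chords inside $D_2$; but $X\cap D_2\subset X_2$, so one can pick points $x_1,\ldots,x_{4g+3}\in X_2$ with $x_i$ between $A'_{n_i}$ and $A'_{n_{i+1}}$ in $D_2$.  Connectedness of $X_2$ and its disjointness from the cat's cradle give an arc $J\subset S\setminus\bigcup \tilde A_{n_j}$ through all the $x_i$, and short arcs in $D_2$ (not in the $U_{n_i}$) join $J$ to each $A'_{n_i}$.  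Collapsing the arc $J$ then yields $K_{3,4g+3}$ as a minor of an embedded graph.  The point is that the third family of edges lives in $D_2$ and uses the separation property of $X_2$, not accessibility from the $U_n$.
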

\begin{proof}
	By \cite[Theorem 1]{Bur51}, we must only show that $X$ is not the essential union of three proper subcontinua. By way of contradiction, suppose that $X = X_1 \cup X_2 \cup X_3$, where each $X_i$ is a proper subcontinuum of $X$ not contained in $\bigcup_{j \neq i} X_j$.  
	We will first find appropriate closed disks $D_1$, $D_2$, and $D_3$.  
	Designated points in $D_1$ and $D_3$ will function as the endpoints of a cat's cradle through $D_2$. 

	By the definitions of $X_1$, $X_2$, and $X_3$, there exist closed disks $D_1, D_2, D_3 \subset S$ such that 
	\begin{enumerate}
		\item the interior of $D_i$ intersects $X_i$, and
		\item $D_i$ is disjoint from $D_j \cup X_j$ when $i \neq j$. 
	\end{enumerate}
	Because of the increasing density of $\seq{U}{n}$, $U_n$ intersects each $D_i$ when $n$ is large enough. 
	By passing to a subsequence, assume that $U_n \cap D_i \neq \emptyset$ for each $n \in \nat$ and $i \in \{1,2,3\}$. 
	Then for each $n \in \nat$ let $A_n \subset U_n$ be an arc such that, for each $i \in \{1,2,3\}$, 
	\begin{enumerate}
		\item $A_n \cap D_i \neq \emptyset$, 
		\item no proper sub-arc of $A_n$ intersects each $D_i$, and 
		\item $A_n \cap \bd D_i$ is finite. 
	\end{enumerate}
	Note that the last condition can be achieved since each $U_n$ is open.
	
	It is evident that $\Int(A_n)$ intersects only one $D_i$ for each $n$. 
	By passing to a subsequence and relabeling the disks, we can assume without loss of generality that $\Int(A_n) \cap D_2 \neq \emptyset$. 
	Extend each $A_n$ to an arc $\tilde A_n$ with arcs in $D_1$ and $D_3$ to obtain a cat's cradle from $d_1$ to $d_3$ through $D_2$, where $d_i$ is a designated point of $D_i$.  
	Notice that $\bigcup_{n=1}^\infty \tilde A_n$ is still disjoint from $X_2$, since $\tilde A_n \setminus (D_1 \cup D_3) \subset U_n$ is disjoint from $X$ and $\tilde A_n \cap (D_1 \cup D_3) \subset D_1 \cup D_3$ is disjoint from $X_2$.
	
	Let $\subseq{A'}{n}{i}$ be the linearly ordered sequence of compact arcs in $\left(\bigcup_{n=1}^\infty \tilde A_n \right) \cap D_2$ guaranteed by Lemma~\ref{lem:linearOrder}. 
	First, notice that $X_2$ separates $A_{n_i} \cap D_2$ from $A_{n_j} \cap D_2$ in $D_2$ when $i \neq j$.
	To see this, observe that $A_{n_i} \cap D_2 \subset U_{n_i}$, and analogously $A_{n_j} \subset U_{n_j}$.
	These are distinct complementary domains of $X$, so $X$ separates $A_{n_i}$ from $A_{n_j}$ in $S$ and thus in $D_2$.
	Since $X \cap D_2 \subset X_2$, we see that $X_2$ separates $A_{n_i}$ from $A_{n_j}$ in $D_2$.
	Accordingly, choose points $x_1, \ldots, x_{4g+3}\in X_2$ so that 
	\begin{enumerate}
		\item each $x_i$ lies between $A'_{n_i}$ and $A'_{n_{i+1}}$ in $D_2$, and 
		\item the set $\bigcup_{j \neq i} A_{n_j}$ does not separate $x_i$ from $A_{n_i}$ in $D_2$.
	\end{enumerate}

  Since $X_2$ is connected and $X_2 \cap \bigcup_{j=1}^{4g+4} A_{n_j} = \emptyset$, there exists an arc $J \subset S \setminus \bigcup_{j=1}^{4g+4}A_{n_j}$ containing $\{x_i\}_{i=1}^{4g+3}$.
	By choice of $x_i$, there are disjoint arcs $\finsubseq{A''}{n}{i}{4g+3} \subset D_2$ which join $J$ to $A_{n_i}$ without intersecting any other $A'_{n_j}$.
	By collapsing the arc $J$ to a point, the graph $K_{3, 4g+3}$ is obtained in 
  \[ G = J \cup \bigcup_{i=1}^{4g+3}(\tilde{A}_{n_i} \cup A''_{n_i}), \] 
  i.e., $K_{3, 4g+3}$ is a minor of $G$.
	Lemma~\ref{lem:bipartite} concludes that the genus of $S$ is at least $g+1$, contradicting our assumption that the genus is $g$. 
\end{proof}

\subsection{A Partial Converse}

As can be expected, Theorem~\ref{thm:burgessSurface} has a partial converse: 
If $X$ is indecomposable, then there exists a sequence $(U_n)_{n=1}^\infty$ of complementary domains of $X$, not necessarily distinct, such that $\lim_{n \rightarrow \infty} \bd U_n = X$. 
This fact will be used in the next section, so we prove it here.  
The proof of this property follows closely the outline of \cite[Theorem 2.10]{CurMayTym07}, modified slightly to allow for a finite degree of \emph{multicoherence}. 

\begin{defn}[multicoherent]
	A connected topological space $X$ is \emph{multicoherent of degree $k$} if, for any pair of closed, connected subsets $A$ and $B$ such that $A \cup B = X$, the intersection $A \cap B$ consists of at most $k$ components.
\end{defn}

Closed surfaces and punctured closed surfaces are examples of finitely multicoherent spaces.  
Let $S$ be a finitely multicoherent space, and suppose that $A \subset S$ is a closed set which separates $p$ from $q$ for points $p, q \in S$.  
Then, by \cite[Theorem 1]{Stone:1949lr}, there is a closed subset $B \subset A$ which has at most $k$ components which also separates $p$ from $q$.  
We use this property in the following proof.
Here, if $A$ and $B$ are compact non-empty subsets of a metric space $(X,d)$, $H_d(A,B)$ represents the Hausdorff distance between $A$ and $B$ in the hyperspace of non-empty compact subsets of $X$.  See \cite{Nadler:1992yq} for details.

\begin{thm} \label{thm:convergeUp}
	Let $X$ be an indecomposable continuum in the closed surface $S$. Then there exists a sequence $(U_n)_{n=1}^\infty$ of (not necessarily distinct) complementary domains of $X$ such that $\lim_{n \rightarrow \infty} \bd U_n = X$. 
\end{thm}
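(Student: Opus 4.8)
The plan is to reduce the theorem to a local density statement about $X$ and then to contradict the internal structure of an indecomposable continuum. Since $S\sm X$ is open, $\bd U\subseteq X$ for every complementary domain $U$ of $X$, so $\lim_{n\to\infty}\bd U_n=X$ in the Hausdorff metric is equivalent to the condition that, for each $\epsilon>0$, the set $\bd U_n$ be eventually $\epsilon$-dense in $X$. Hence it suffices to prove that for every $\epsilon>0$ there is a complementary domain $U$ with $\bd U$ $\epsilon$-dense in $X$; the required sequence is then obtained by choosing such a $U$ for each $\epsilon=1/n$. The degenerate case is trivial, so assume $X$ is non-degenerate.

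Next I would collect the consequences of indecomposability the argument needs. By the characterization recalled in Section~\ref{sec:introduction} together with the Baire category theorem, every proper subcontinuum $K$ of $X$ is nowhere dense in $X$ (choosing $p\in K$ gives $K\subseteq C_p$, and $C_p$ is a countable union of nowhere dense sets, so $K$ contains no nonempty open subset of the Baire space $X$); in particular $X$ has empty interior in $S$ — a closed disk inside $X$ would be a subcontinuum with nonempty interior and hence all of $X$, but a $2$-disk is decomposable — and $X$ is not a finite union of proper subcontinua. Since $S\sm X$ is now dense, every sufficiently small metric ball about a point of $X$ is connected and meets $S\sm X$; if such a ball misses $\bd U$ then it misses $U$, and the same remark shows $\bigcup_i\bd V_i$ is dense in $X$, where $V_1,V_2,\dots$ enumerate all complementary domains of $X$. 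Thus by compactness finitely many of the open sets $\{x\in X:d(x,\bd V_i)<\epsilon/3\}$ cover $X$.

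Now fix $\epsilon>0$ and suppose, for a contradiction, that no complementary domain of $X$ has $\epsilon$-dense boundary; the goal is to extract a decomposition of $X$, equivalently a proper subcontinuum of $X$ with nonempty interior, contradicting the previous paragraph. Any complementary domain $U$ comes with a point $z\in X$ such that $\bd U\cap B(z,\epsilon)=\emptyset$; then $S\sm U$ is a continuum containing $X$ whose boundary in $S$ is $\bd U$ and whose interior contains $B(z,\epsilon)$, so $\bd U$ separates the points of $U$ from $z$ in $S$, and the closure in $X$ of $\{x\in X:d(x,\bd U)<\epsilon/3\}$ is a proper closed subset of $X$ with nonempty interior (it avoids $B(z,2\epsilon/3)$ yet contains a nonempty open set). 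The step I expect to be the main obstacle is to pass from such a set to an actual proper subcontinuum of $X$ with nonempty interior: intersecting a connected open subset of $S$ with $X$ may disconnect it, so the candidate need not be a finite union of subcontinua. This is precisely where the surface hypothesis enters — in the planar case \cite[Theorem~2.10]{CurMayTym07} the unicoherence of $\sphere$ is used to keep the relevant sets connected, and the surface version instead invokes Stone's theorem \cite{Stone:1949lr}, available because $S$ and its punctured copies are multicoherent of some finite degree $k$ determined by $S$, to bound the number of components of the separating sets that occur and so extract the required subcontinuum. Carrying this out — over the finitely many complementary domains whose $\epsilon/3$-neighborhoods cover $X$, while tracking the degree $k$ — is the substance of the proof and follows the outline of \cite[Theorem~2.10]{CurMayTym07}.
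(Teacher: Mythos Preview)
Your proposal stops exactly at the hard step and defers it to ``following the outline of \cite[Theorem~2.10]{CurMayTym07}'', but the framework you have set up does not match that outline, and your invocation of Stone's theorem is a placeholder rather than an argument. From the hypothesis that no $\bd V$ is $\epsilon$-dense you produce finitely many proper closed subsets $N_j=\cl{\{x\in X:d(x,\bd V_{i_j})<\epsilon/3\}}$ covering $X$; but a cover of $X$ by proper closed sets does not contradict indecomposability, and Stone's theorem bounds the number of components of a \emph{separator} in a finitely multicoherent space, not of a metric neighborhood taken inside $X$. You never exhibit a separation in such a space to which Stone applies, so there is no mechanism in your outline for turning the $N_j$ into subcontinua.

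The paper's proof is not by contradiction and is organized around a different separation. One picks $p,q,r$ in three distinct composants of $X$ and lets $Q_n,R_n$ be the components of $X\sm B_{1/n}(p)$ containing $q$ and $r$; composant density gives $Q_n,R_n\to X$. In the punctured surface $S\sm B_{1/n}(p)$ the disjoint continua $Q_n$ and $R_n$ are separated by a closed subset of $S\sm(B_{1/n}(p)\cup X)$, and \emph{this} is where Stone's theorem is applied: it yields a closed separator $L_n^1\cup\cdots\cup L_n^k$ with at most $k$ components, each lying in $S\sm X$ and hence inside a single complementary domain $U_n^i$. A short arc argument then shows $\bigcup_{i=1}^k\bd U_n^i\to X$; passing to subsequential limits writes $X$ as a union of at most $k$ subcontinua $X_i=\lim_n\bd U_n^i$, and indecomposability forces some $X_i$ to equal $X$, whence ordering the $L_n^i$ by Hausdorff distance to $X$ gives $\bd U_n^1\to X$. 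The structural point your outline misses is that the separator lives in the \emph{complement} of $X$, which is what attaches complementary domains to the construction; your candidate sets $N_j$ live inside $X$ and carry no such information.
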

\begin{proof}
	This is clear if $X$ is a point, so assume $X$ is a non-degenerate indecomposable continuum. 
	For the purposes of this proof, suppose that $S$ is equipped with a metric $d$ in which the set
	\[B_{\epsilon}(p) = \{x \in S \mid d(p, x) < \epsilon\}\]
	is simply connected when $\epsilon \in (0,1)$.
	Let $p$, $q$, and $r$ lie in different composants of $X$. 
	For each $n\in\nat$, define 
	\begin{align*}
    Q_n &= \text{the component of } X \setminus B_{1/n}(p) \text{ containing } q,\text{ and}\\
		R_n &= \text{the component of } X \setminus B_{1/n}(p) \text{ containing } r.
	\end{align*}
	
	Notice that $\lim_{n\rightarrow \infty}Q_n = \lim_{n \rightarrow \infty} R_n = X$, by density of composants. 
	Since $Q_n$ and $R_n$ are different components of $X \setminus B_{1/n}(p)$, they are separated in $S \setminus B_{1/n}(p)$ by $S \setminus (B_{1/n}(p) \cup X)$. 
	Thus, $Q_n$ and $R_n$ are closed and separated in the normal space $S \setminus B_{1/n}(p)$, so there is a subset $K_n$, closed in $S \setminus B_{1/n}(p)$, of $S \setminus (B_{1/n}(p) \cup X)$ which separates $Q_n$ and $R_n$. 
	Since $S \setminus B_{1/n}(p)$ is finitely multicoherent, say of degree $k$, then a subset 
		\[ L_n^1 \cup \ldots \cup L_n^k \] 
	is a closed separator (in $S \setminus B_{1/n}(p)$) of $Q_n$ and $R_n$, where the elements of the union are disjoint, closed, and connected (though perhaps some are empty). 
	We may assume that they are ordered so that 
	\[H_d(L_n^1, X) \le \ldots \le H_d(L_n^{k}, X),\]
	where $H_d(\emptyset, X)$ can be regarded as $\infty$.
	Moreover, since each $L_n^i$ is disjoint from $X$ and connected, there exists a complementary domain $U_n^i$ such that
	\[L_n^i \subset U_n^i.\]
	(The set $U_n^i$ can be any complementary domain of $X$ if $L_n^i$ is empty.) 
	
	The sequence $(U_n^1)_{n=1}^\infty$ formed in this way is the required sequence of complementary domains.  
	We will show that any convergent subsequence of $(\bd U_n^1)_{n=1}^\infty$ converges to $X$, implying that the sequence itself converges to $X$.
	
	First we demonstrate that $X \subset \liminf_{n \rightarrow \infty} \bigcup_{i=1}^k \bd U^i_n$. 
	Choose $x \in X \setminus \{p\}$ and $0 < \epsilon < d(x,p)$.  
	Let $N \in \nat$ such that, for all $n \ge N$ 
	\begin{enumerate}
		\item $Q_n \cap B_\epsilon(x) \neq \emptyset$, 
		\item $R_n \cap B_\epsilon(x) \neq \emptyset$, and 
		\item $B_{1/n}(p) \cap B_\epsilon(x) = \emptyset$. 
	\end{enumerate}
	
	For $n \ge N$, choose $q_n \in Q_n \cap B_\epsilon(x)$ and $r_n \in R_n \cap B_\epsilon(x)$. 
	Let $A_n \subset B_\epsilon(x) \subset S \setminus B_{1/n}(p)$ be an arc joining $q_n$ to $r_n$.
	Then
	\[A_{n} \cap \bigcup_{i=1}^k L_n^i \neq \emptyset,\]
	since $\bigcup_{i=1}^k L_n^i$ separates $q_n$ from $r_n$ in $S \setminus B_{1/n}(p)$. 
	Since $L_n^i \subset U_n^i$ for each $i$ and $q_n$, $r_n$ are not in any $U_n^i$ (they lie in $X$), 
	\[A_{n} \cap \bigcup_{i=1}^k \bd U_n^i \neq \emptyset,\] implying that 
	\[\bigcup_{i=1}^k \bd U_n^i \cap B_{\epsilon}(x) \neq \emptyset.\]
	This is true for all $n \ge N$, so $x \in \liminf_{n \rightarrow \infty} \bigcup_{i=1}^k \bd{U^i_n}$, and we have that $X = \lim_{n \rightarrow \infty} \bigcup_{i=1}^k\bd{U^i_n}$.
	
	Now, let us consider the individual limits $\lim_{n \rightarrow \infty} \bd U_n^i$ for some fixed $i \le k$. 
	By passing to a subsequence, we may assume that the limit $X_i = \lim_{n \rightarrow \infty} \bd U_n^i$ exists for each $i \le k$.  
	Then, since $\lim_{n \rightarrow \infty} \bigcup_{i=1}^k \bd U_n^i = X$, we see that $\bigcup_{i=1}^k X_i = X$. 
	However, this is a finite union of continua, and $X$ is indecomposable, so at least one $X_i$ is not a proper subcontinuum of $X$.  
	By continuity, we see that $H_d(X_1, X) \le H_d(X_i, X)$ for all $i \le k$, so $X_1 = \lim_{n\rightarrow \infty} \bd U_n^1 = X$.  
\end{proof}

\section{Characterization of Shielded Indecomposable Continua}\label{sec:charThm} 
A subcontinuum $X$ of a closed surface $S$ is called $\emph{shielded}$ if, for every complementary domain $U$ of $X$, $\bd U \neq X$.
We extend Theorem~\ref{thm:planchar}, which is a characterization of planar indecomposable continua, to shielded subcontinua of closed surfaces.

The assumption that a continuum is shielded already imparts some complexity.  
For instance, any sequence of complementary domains whose boundaries converge to the continuum must consist of infinitely many distinct elements.
We use this property to bridge the gap left by the relatively weak separation properties of compact surfaces.

The proofs in this section will depend upon the existence of particularly well-behaved homeomorphisms of simply connected domains in a surface to the unit disk $\Disk$ in the plane.
Specifically, we wish for a null sequence of crosscuts in a domain to correspond to a null sequence of crosscuts in $\Disk$. 
We will prove that conformal isomorphisms have this property, so for the remainder we will assume that the closed surface $S$ is endowed with a conformal structure and that $d$ is the corresponding metric.

\begin{lem}
  If $X$ is a non-degenerate continuum in $S$, then all of its simply connected complementary domains are conformally isomorphic to the unit disk.
\end{lem}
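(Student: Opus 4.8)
The plan is to apply the uniformization theorem and eliminate the two non-hyperbolic models. Let $U$ be a simply connected complementary domain of $X$; if $X=S$ there are no such domains and the statement is vacuous, so assume $X\subsetneq S$. Being an open subset of the Riemann surface $S$, $U$ is itself a simply connected Riemann surface, so by the uniformization theorem of Koebe and Poincar\'e it is conformally isomorphic to exactly one of $\rsphere$, $\complex$, or $\Disk$; it remains to rule out the first two. First I would note that $U$ is not compact: it is a nonempty open subset of $S$ contained in $S\setminus X\subsetneq S$, and a compact open subset of the connected space $S$ would have to be all of $S$. Hence $U\not\cong\rsphere$.

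The substantive step is to exclude $U\cong\complex$, and this is the only place where non-degeneracy of $X$ is used. Suppose $\Phi\colon\complex\to U$ is a conformal isomorphism; composing with the inclusion gives a non-constant \emph{injective} holomorphic map $\Psi\colon\complex\to S$. I would show that the image of any such map omits at most one point of $S$. Granting this, $X\subseteq S\setminus U=S\setminus\Psi(\complex)$ would contain at most one point, contradicting that $X$ is non-degenerate; so $U\cong\Disk$, as desired. To prove the claim, lift $\Psi$ through the universal covering $q\colon\widetilde S\to S$ --- where, again by uniformization, $\widetilde S$ is one of $\rsphere$, $\complex$, $\Disk$ --- to a holomorphic map $\widetilde\Psi\colon\complex\to\widetilde S$, which is injective because $\Psi$ is. If $\widetilde S=\Disk$, then $\widetilde\Psi$ is a bounded entire function, hence constant by Liouville, contradicting injectivity; so this case cannot occur. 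If $\widetilde S=\complex$, then $\widetilde\Psi$ is an injective entire function, hence an affine bijection of $\complex$, so $\Psi=q\circ\widetilde\Psi$ is surjective and omits nothing. If $\widetilde S=\rsphere$, then $S$ is the Riemann sphere; picking $z_0\in S\setminus U$ and a M\"obius transformation $M$ with $M(z_0)=\infty$, the map $M\circ\Psi\colon\complex\to\rsphere\setminus\{\infty\}=\complex$ is injective entire, hence an affine bijection, so $\Psi(\complex)=\rsphere\setminus\{z_0\}$ omits only $z_0$. This proves the claim in all cases.

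The main obstacle is precisely the exclusion of $\complex$; the rest is bookkeeping. That exclusion rests on two classical rigidity facts --- that there is no non-constant holomorphic map from $\complex$ into a surface whose universal cover is $\Disk$, and that every injective entire function is affine --- combined with the observation that a complementary domain of the non-degenerate continuum $X$ necessarily has $X$ in its complement, so it cannot have complement of size at most one. Finally, since the argument never used anything special about $U$ beyond its being a simply connected complementary domain of $X$, it applies to all of them at once, which is what the statement requires.
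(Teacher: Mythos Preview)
Your proof is correct, but it takes a different route from the paper's. The paper lifts $U$ itself to the universal cover $P$ of $S$: a component $\hat U$ of $\pi^{-1}(U)$ is a covering of $U$, and since $U$ is simply connected this covering is trivial, so $\hat U\cong U$ conformally. Then $\hat U$ is a simply connected open subset of $P\in\{\rsphere,\complex,\Disk\}$ that misses $\pi^{-1}(X)$, and the classical Riemann mapping theorem applied to $\hat U$ finishes the argument in one stroke. You instead apply uniformization directly to $U$ and then work to exclude the parabolic model by classifying injective holomorphic maps $\complex\to S$ via a lift to $\widetilde S$. Both arguments ultimately pass through the universal cover of $S$, but the paper uses it once and lets Riemann mapping absorb the case analysis, while your approach front-loads the trichotomy and handles each case by hand. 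The paper's version is shorter; yours is more self-contained and yields the pleasant byproduct that any injective holomorphic $\complex\to S$ has cofinite image, which is a bit more than is needed here.
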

\begin{rem}
  Note that, in contrast to the planar case, some complementary domains may not be simply connected. 
\end{rem}
\begin{proof}
  Let $P$ be the universal covering space of $S$, with corresponding conformal covering map $\pi:P \rightarrow S$.
  Recall that, by the Uniformization Theorem \cite[Theorem 1.1]{Mil06}, $P$ is conformally isomorphic to a simply connected subset of the Riemann sphere.
  If $\hat U$ is a component of $\pi^{-1}(U)$, then $\hat U$ is simply connected and $\pi|_{\hat U}$ is a covering map.  In fact, $\pi|_{\hat U}$ is a conformal isomorphism since its trivial fundamental group is isomorphic to the group of deck transformations for $\pi|_{\hat U}$.
  However, $\hat U$ misses $\pi^{-1}(X)$, so $\hat U$ is conformally isomorphic to the unit disk by the classical Riemann mapping theorem.
\end{proof}

The proof of the following lemma is identical to the proof of \cite[Lemma 3.4]{CurMayTym07}, and is included here for completeness. 
\begin{lem}
	Let $U$ be a simply connected open subset of $S$ with non-degenerate boundary. 
	Let $\phi:U \rightarrow \Disk$ be a conformal isomorphism. 
	Then the image of a null sequence $(K_n)_{n=1}^\infty$ of crosscuts of $U$ is a null sequence of crosscuts in $\Disk$. 
\end{lem}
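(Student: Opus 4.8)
The plan is to push the whole picture to the unit disk via $\phi$, produce for each $n$ a small circular crosscut of $U$ that encircles $K_n$ and has a tiny $\phi$-image (this is where the length--area method enters), and then argue by plane topology that $\phi(K_n)$ must be trapped in a region whose diameter is comparable to that tiny image. Throughout write $\psi = \phi^{-1}$ and fix the basepoint $w_0 = \psi(0)\in U$.

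First the routine half. Since $\phi$ is a homeomorphism onto $\Disk$ it is proper, so any sequence of subsets of $U$ that eventually misses every compact subset of $U$ is carried to one eventually missing every compact subset of $\Disk$. As $(K_n)$ is null and each $\overline{K_n}$ is a compact arc meeting the non-degenerate set $\bd U$, we have $\mathrm{dist}_S(K_n,\bd U)\to 0$ and $\mathrm{diam}_S K_n\to 0$, so $(K_n)$ does leave every compact subset of $U$; hence $\phi(K_n)\subset\{z : |z|>\rho_n\}$ for some $\rho_n\to 1$. Moreover each end of the arc $\phi(K_n)$ clusters only in $\bd\Disk$, and since the corresponding end of the crosscut $K_n$ lands at a point of $\bd U$, standard prime-end theory (Lindel\"of's theorem) shows that end lands at a single point of $\bd\Disk$; thus $\phi(K_n)$ is genuinely a crosscut of $\Disk$. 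This much I would simply cite.

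For the diameter bound, fix a large $n$, pick an endpoint $p_n\in\overline{K_n}\cap\bd U$, and set $\eta_n = \mathrm{diam}_S\overline{K_n}\to 0$, so $\overline{K_n}\subset B_{2\eta_n}(p_n)$. Work in a conformal chart centered at $p_n$ of some fixed radius $r_0$ (available uniformly since $S$ is compact); in this chart $\phi$ is holomorphic and injective, and its image, lying in $\Disk$, has Euclidean area at most $\pi$. The classical length--area inequality then makes $\int_0^{r_0} L_n(\rho)^2\,d\rho/\rho$ finite, where $L_n(\rho)$ is the total length of the $\phi$-image of the circle of radius $\rho$ about $p_n$ intersected with $U$. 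Since $\int_{c\eta_n}^{\sqrt{\eta_n}} d\rho/\rho\to\infty$, there is a radius $\rho_n$ with $c\eta_n<\rho_n<\sqrt{\eta_n}$ (the constant $c$ chosen so $\overline{K_n}$ lies in the chart-ball of radius $\rho_n$) and with $\delta_n := L_n(\rho_n)\to 0$. Let $G_n$ be the component of (the chart-ball of radius $\rho_n$) $\cap\, U$ that contains $K_n$. Then the frontier of $\phi(G_n)$ in $\Disk$ equals $\phi(\mathrm{fr}_U G_n)$, which lies on that circle's image and so has one-dimensional Hausdorff measure at most $\delta_n$; also $G_n$ is tiny in $S$ and hugs $\bd U$, so $\phi(G_n)$ together with its frontier lies in $\{|z|>\sigma_n\}$ for some $\sigma_n\to 1$; and $0\notin\phi(G_n)$ because $w_0\notin G_n$.

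To finish I would invoke a plane-topology lemma, proved by a radial-projection estimate: if $N\subset\Disk$ is open and connected, $0\notin N$, and the frontier of $N$ in $\Disk$ lies in $\{|z|>1-\epsilon\}$ and has one-dimensional Hausdorff measure less than $\epsilon$, then $\mathrm{diam}\, N\le C\epsilon$ for an absolute constant $C$. Apply this to the component $N_n$ of $\Disk\setminus\phi(\mathrm{fr}_U G_n)$ containing $\phi(K_n)$: one checks $0\notin N_n$ because a path in $\Disk$ from $\phi(K_n)$ to $0$ pulls back under $\psi$ to a path in $U$ from $K_n$ to $w_0$, which must leave $G_n$ and hence meet $\mathrm{fr}_U G_n$. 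Consequently $\mathrm{diam}\,\phi(K_n)\le\mathrm{diam}\, N_n\to 0$, so $(\phi(K_n))$ is a null sequence of crosscuts, as required. The step I expect to cost the most effort is exactly this last reduction — showing that an encircling crosscut with tiny $\phi$-image, separating $K_n$ from the fixed basepoint, confines $\phi(K_n)$ to a set of comparably small diameter (the radial-projection lemma and the verification that $0\notin N_n$) — together with the bookkeeping needed to run the length--area inequality in a chart on the surface.
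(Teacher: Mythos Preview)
Your argument is correct but follows a genuinely different route from the paper's. The paper proceeds by soft compactness: after passing to a subsequence so that $(K_n)$ converges to a point $x\in\bd U$ and $(\overline{\phi(K_n)})$ converges to some continuum $L\subset\overline{\Disk}$, one first observes $L\subset\bd\Disk$, then for any $t\in L$ invokes \cite[Lemma~17.9]{Mil06} to produce a chain of crosscuts $(A'_m)$ of $\Disk$ shrinking to $t$ whose $\phi^{-1}$-images $(K'_m)$ form a null sequence in $U$; since $(K_n)$ is null and converges to $x$, eventually each $K_n$ lies in the crosscut neighborhood of $K'_m$ on the side of $t$, hence the corresponding $\phi(K_n)$ lies in the crosscut neighborhood of $A'_m$ on that side, and these neighborhoods shrink to $t$. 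Thus $L=\{t\}$ and $(\phi(K_n))$ is null.

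You instead run the length--area inequality directly on $\phi$ in a conformal chart about an endpoint of $K_n$, extract an encircling circle whose $\phi$-image has small length, and finish with the radial-projection lemma to bound $\mathrm{diam}\,\phi(K_n)$. The two approaches are close cousins: Milnor's Lemma~17.9 is itself proved by length--area, so you are in effect unpacking the black box the paper cites and reorganizing the terminal topological step (your radial-projection lemma plays the role of the paper's nested-crosscut-neighborhood argument). What your version buys is self-containment and an explicit quantitative estimate; what the paper's version buys is brevity, since the analytic work is absorbed into the citation. One minor point worth tightening in your write-up: when you pass to a conformal chart, the $S$-metric and the chart Euclidean metric differ by a bounded factor, so the constants in ``$\overline{K_n}\subset B_{2\eta_n}(p_n)$'' and in the choice of $c$ should be tracked through that comparison; you acknowledge this, but it should be made explicit.
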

\begin{proof}
	Let $(K_n)_{n=1}^\infty$ be a null sequence of crosscuts of $U$ with image sequence $(A_n)_{n=1}^\infty = (\phi(K_n))_{n=1}^\infty$.
	Without loss of generality, assume that $(K_n)_{n=1}^\infty$ converges to a point $x \in \bd U$.
	By passing to a subsequence, we may assume that the image sequence converges to a continuum $L \subset \overline \Disk$. 
	Since $(K_n)_{n=1}^\infty$ does not accumulate on a subset of $U$, we see that $L \subset \bd \Disk$. 
	
	Let $t \in L$. 
	There exists a chain of crosscuts $(A_n')_{n=1}^\infty$ of $\Disk$ converging to $t$ which maps to a null sequence $(K_n')_{n=1}^\infty$ of crosscuts of $U$ by $\phi^{-1}$.
	(See \cite[Lemma 17.9]{Mil06}; the proof does not rely on the planarity of $U$.)
	We may assume that $(K_n')_{n=1}^\infty$ converges to a point of $\bd U$ by passing to a subsequence. 
	For each $m \in \nat$, that $(A_n)_{n=1}^\infty$ accumulates on $t$ implies that all but finitely many $A_n$ intersect the crosscut neighborhood of $A_m'$ corresponding to $t$. 
	Also, since $(K_n)_{n=1}^\infty$ forms a null sequence in $U$ and converges to $x$, we see that all but finitely many $K_n$ (thus $A_n$) lie entirely within the crosscut neighborhood of $K_m'$ (thus $A_m'$) corresponding to $t$. 
	However, the crosscut neighborhoods of $A_m'$ form a null sequence as $m \rightarrow \infty$, so $(A_n)_{n=1}^\infty$ form a null sequence. 
\end{proof}
\begin{lem}\label{lem:unlinked} 
	Suppose $U \subset S$ is open, connected, and simply connected with non-degenerate boundary, and let $\phi:U \rightarrow \Disk$ be a conformal isomorphism. 
	Let $B_1$ and $B_2$ be disjoint closed disks meeting $\bd U$, and let $E_i \subset \bd \Disk$ denote the set of endpoints of the crosscuts of $\Disk$ which form $\phi((\bd B_i) \cap U)$. 
  If, for different $i$ and $j$, $E_i$ lies in a component of $\ucirc \setminus E_j$, then there is a generalized crosscut $K$ of $U$ which separates $B_1 \cap U$ from $B_2 \cap U$ in $U$. 
	Moreover, if $\bd U$ is locally connected, then $K$ is a crosscut of $U$. 
\end{lem}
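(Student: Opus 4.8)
The plan is to transport the whole configuration into $\Disk$ through $\phi$, build a separating generalized crosscut there, and carry it back by $\phi^{-1}$; the two preceding lemmas are precisely what makes this transport legitimate.

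First I would record the disk-side picture. Since $B_i$ is a closed $2$-cell, the frontier of $B_i\cap U$ relative to $U$ is exactly $(\bd B_i)\cap U$, which is a disjoint union of (at most a null sequence of) crosscuts of $U$; by the previous two lemmas $\phi$ carries this family to a corresponding family $\mathcal C_i$ of crosscuts of $\Disk$ with endpoint set $E_i$, and it carries $\hat B_i:=\phi(B_i\cap U)$ to a relatively closed subset of $\Disk$ with $\bd_{\Disk}\hat B_i=\bigcup\mathcal C_i$. Thus $\hat B_i$ is $\bigcup\mathcal C_i$ together with a union of components of $\Disk\sm\bigcup\mathcal C_i$; in particular $E_i\subset\cl{\hat B_i}\cap\ucirc$, and $\hat B_1\cap\hat B_2=\emptyset$ because $B_1\cap B_2=\emptyset$. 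I would also note $\cl{\hat B_i}\cap\ucirc\neq\emptyset$: otherwise $\hat B_i$ is compact, so $B_i\cap U$ is a nonempty clopen subset of the connected set $B_i$, forcing $B_i\subset U$ and contradicting $B_i\cap\bd U\neq\emptyset$. Finally, the hypothesis that $E_i$ lies in a component of $\ucirc\sm E_j$ already forces $E_1\cap E_2=\emptyset$ and that $E_1,E_2$ are \emph{unlinked} on $\ucirc$.

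The core is then a purely planar claim: under the unlinking hypothesis there is a generalized crosscut $\hat K$ of $\Disk$, disjoint from $\hat B_1\cup\hat B_2$, one of whose two crosscut-neighborhoods contains $\hat B_1$ while the other contains $\hat B_2$. Let $I$ be the component of $\ucirc\sm E_2$ containing $E_1$ (an open arc with $\cl{E_1}\subset\cl I$ and $E_2\subset\ucirc\sm I$), and let $I'$ be the component of $\ucirc\sm E_1$ containing $E_2$. Using that every crosscut of $\mathcal C_1$ has both endpoints in $I$, and so fails to separate $\ucirc\sm I\supset E_2$, together with $\hat B_1\cap\hat B_2=\emptyset$ and $E_i\subset\cl{\hat B_i}$, I would show $\cl{\hat B_1}\cap\ucirc\subset\cl I$ and, symmetrically, $\cl{\hat B_2}\cap\ucirc\subset\cl{I'}$, where $\cl I$ and $\cl{I'}$ meet (if at all) only in the two endpoints of $I$ — this is what rules out either $\hat B_i$ ``reaching around'' the other inside $\Disk$. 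Taking $\hat K$ to be an arc run just inside $I$ from near one endpoint of $I$ to near the other then separates $\hat B_1$ from $\hat B_2$; it is an honest chord when the endpoints of $I$ avoid $\cl{E_1}\cup\cl{E_2}$, but in the degenerate case where $\cl{E_1}$ and $\cl{E_2}$ share an endpoint $a$ of $I$ one is forced to let $\cl{\hat K}\sm\hat K$ contain $a$, so that $\hat K$ is only a \emph{generalized} crosscut. I expect this step — especially handling that degenerate subcase, which is exactly what the word ``generalized'' in the conclusion absorbs — to be the main obstacle, as it requires the unicoherence/connectedness bookkeeping relating the merely combinatorial unlinking of $E_1,E_2$ to the actual position of $\hat B_1,\hat B_2$ in $\Disk$.

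To finish, set $K:=\phi^{-1}(\hat K)$, an embedded copy of $(0,1)$ in $U$. The null-sequence property of $\phi^{-1}$ shows $\cl K\sm K\subset\bd U$, so $K$ is a generalized crosscut of $U$; and since $\phi$ is a homeomorphism carrying $\hat B_i$ onto $B_i\cap U$ and $\hat K$ onto $K$, the component of $U\sm K$ corresponding to a given crosscut-neighborhood of $\hat K$ contains the corresponding $B_i\cap U$, so $K$ separates $B_1\cap U$ from $B_2\cap U$ in $U$. For the ``moreover'', suppose $\bd U$ is locally connected; then, $U$ being conformally a disk, Carath\'eodory's theorem extends $\phi^{-1}$ to a continuous map $\cl\Disk\to\cl U$, whence $\phi(\cl{E_i})\subset\cl{\bd B_i\cap\bd U}\subset\bd B_i$. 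Since $\bd B_1\cap\bd B_2=\emptyset$, this forbids the degenerate case above, so $\hat K$ may be taken to be an honest chord, and we have enough freedom to choose its two endpoints generically inside the arcs produced above so that their images under the extended $\phi$ are distinct from each other (and, being in $\bd U$, automatically distinct from every point of $K$); then $\cl K=\phi^{-1}(\cl{\hat K})$ is a compact arc, i.e., $K$ is a crosscut of $U$.
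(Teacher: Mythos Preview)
The paper offers no self-contained argument here: its entire proof reads ``Identical to proof of Lemma~3.5 in \cite{CurMayTym07}.'' Your outline---push the configuration to $\Disk$ via $\phi$, use the unlinking of $E_1,E_2$ on $\ucirc$ to build a separating (generalized) crosscut there, then pull back by $\phi^{-1}$, invoking the preceding null-sequence lemma to certify the pullback as a generalized crosscut and Carath\'eodory for the locally connected addendum---is precisely the natural strategy and almost certainly what the cited lemma does; the one slip is notational: in your last paragraph, where you write $\phi(\cl{E_i})$ you mean the Carath\'eodory extension of $\phi^{-1}$ applied to $\cl{E_i}$.
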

\begin{proof}
	Identical to proof of Lemma~3.5 in \cite{CurMayTym07}. 
\end{proof}

\begin{thm}\label{thm:surfaceChar}
	\label{thm:dblpass} A shielded subcontinuum $X$ of a closed surface $S$ is indecomposable if and only if it has a sequence of complementary domains which satisfies the double-pass condition. 
\end{thm}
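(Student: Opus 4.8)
The plan is to prove the two implications separately. Throughout, recall that $\bd U\subset X$ for every complementary domain $U$, so that any sequence of shadows $S_n$ of crosscuts $K_n\subset U_n$ with $S_n\to X$ forces $\bd U_n\to X$; and since $X$ is shielded, a sequence of complementary domains whose boundaries converge to $X$ can repeat each value only finitely often, so we may always pass to a subsequence of pairwise distinct domains. We also use that a continuum in a closed surface has only finitely many complementary domains which are not simply connected (a consequence of $S$ having finite genus), so that after discarding these the conformal and prime-end machinery above applies to all domains we work with.

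For the ``only if'' direction, suppose $X$ is indecomposable. Theorem~\ref{thm:convergeUp} produces complementary domains $U_n$ with $\bd U_n\to X$, and passing to a subsequence we may take them pairwise distinct and simply connected. I claim $\seq U n$ satisfies the double-pass condition. Let $\seq K n$ be any generalized crosscuts, $K_n\subset U_n$. Each $K_n$ has one or two shadows: the closure of each crosscut neighbourhood is a continuum containing $K_n$ and meeting $\bd U_n$ in a shadow, and the shadows of $K_n$ together exhaust $\bd U_n$ (a boundary point of $U_n$ not in $\cl{K_n}$ is a limit of points lying in the crosscut neighbourhoods, while $\cl{K_n}\sm K_n$ lies in the closure of each). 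For a simply connected domain each shadow is in fact a subcontinuum of the boundary (this is where simple connectedness enters, via the prime-end structure). Now pass to a subsequence along which $\bd U_n\to X$ and along which each shadow $S_n^j$ converges to a continuum $Y_j\subset X$; since the shadows cover $\bd U_n$ we get $\bigcup_j Y_j=X$, and indecomposability forces $Y_j=X$ for some $j$. Hence $\min_j H_d(S_n^j,X)\to 0$ along that subsequence, and as this holds for every subsequence it holds for the full sequence. Letting $S_n$ be the shadow of $K_n$ nearest to $X$, we get $S_n\to X$, as required.

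For the ``if'' direction, suppose $X$ has a sequence $\seq U n$ of complementary domains satisfying the double-pass condition. Applying the condition to any choice of crosscuts yields shadows converging to $X$, whence $\bd U_n\to X$; passing to a subsequence of distinct, simply connected domains, Theorem~\ref{thm:burgessSurface} shows $X$ is indecomposable or $2$-indecomposable, so it suffices to exclude the latter. If $X$ were $2$-indecomposable, write $X=A\cup B$ with $A,B$ proper subcontinua and fix $a\in A\sm B$, $b\in B\sm A$ together with disjoint closed disks $D_a\ni a$, $D_b\ni b$ with $D_a\cap B=D_b\cap A=\emptyset$; for large $n$, $U_n$ meets both $\Int D_a$ and $\Int D_b$. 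The key geometric claim is that for all large $n$ there is a generalized crosscut $K_n$ of $U_n$ separating $D_a\cap U_n$ from $D_b\cap U_n$ in $U_n$. Granting this, the shadow of $K_n$ on the side of $D_a\cap U_n$ cannot meet $\Int D_b$ and the shadow on the side of $D_b\cap U_n$ cannot meet $\Int D_a$, so every shadow of $K_n$ misses one of the two fixed open sets $\Int D_a,\Int D_b$, each of which meets $X$; a pigeonhole argument then shows that no sequence of shadows of these $K_n$ converges to $X$, contradicting the double-pass condition. Hence $X$ is indecomposable.

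The heart of the proof, and the step I expect to be hardest, is producing the separating crosscut $K_n$. One transports the picture to the disk via a conformal isomorphism $\phi_n\colon U_n\to\Disk$; by the lemma that conformal isomorphisms carry null sequences of crosscuts to null sequences, $\phi_n(\bd D_a\cap U_n)$ and $\phi_n(\bd D_b\cap U_n)$ are families of crosscuts of $\Disk$ with endpoint sets $E_a^n,E_b^n\subset\ucirc$, and Lemma~\ref{lem:unlinked} yields the desired $K_n$ provided $E_a^n$ and $E_b^n$ are unlinked, i.e. one lies in a single component of $\ucirc\sm$ the other. Establishing this unlinking for all large $n$ is the delicate point: one must choose $a,b$ well inside their respective pieces, shrink $D_a,D_b$, and exploit the shieldedness of $X$ — which guarantees infinitely many \emph{distinct} domains $U_n$ whose boundaries crowd onto $X=A\cup B$ — to limit how $\bd U_n$ can weave back and forth between $D_a$ and $D_b$, passing to a further subsequence of the $U_n$ if necessary. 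This is exactly where the weak separation properties of surfaces, as opposed to the plane, must be compensated for, and is the reason the shielded hypothesis appears.
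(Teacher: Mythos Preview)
Your ``only if'' direction is fine and matches the paper's argument.  The ``if'' direction, however, has a genuine gap at exactly the point you flag as delicate, and the strategy there is in fact backwards.

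You try to show that, for large $n$, the endpoint sets $E_a^n$ and $E_b^n$ are \emph{unlinked} on $\ucirc$, so that Lemma~\ref{lem:unlinked} produces a separating generalized crosscut and the double-pass condition is violated.  But you give no argument for unlinking, and none is available: nothing about the decomposition $X=A\cup B$ or about shieldedness prevents $\bd U_n$ from alternating between $D_a$ and $D_b$ in a linked pattern.  The paper runs the logic the other way.  From the double-pass hypothesis one knows that \emph{no} generalized crosscut of $U_n$ separates $D_1\cap U_n$ from $D_2\cap U_n$; by the contrapositive of Lemma~\ref{lem:unlinked}, the endpoint sets therefore \emph{do} link on $\ucirc$.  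That linking is then exploited: it yields a crosscut $F_n$ of $U_n$ with both ends in $\bd D_1$ and an arc $A_n\subset U_n$ with both ends in $D_2$ crossing $\phi_n^{-1}(F_n)$ transversely exactly once.  Closing each $A_n$ through a common basepoint in $D_2$ gives simple closed curves $C_n$, and this is where shieldedness is actually used: it guarantees infinitely many \emph{distinct} $U_n$, so one can take $C_1,\ldots,C_{2g+1}$ meeting only at the basepoint.  A wedge of $2g+1$ circles separates a genus-$g$ surface, and since each $F_n$ crosses $Y=\bigcup C_i$ exactly once with both endpoints in $X_1$, the continuum $X_1$ meets both sides of the separation while $Y\cap X_1=\emptyset$, contradicting connectedness of $X_1$.

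In short, the missing idea is not a refinement that forces unlinking, but the construction that turns the (unavoidable) linking into a one-point union of $2g+1$ curves disconnecting $X_1$.  Your invocation of Theorem~\ref{thm:burgessSurface} to reduce to the $2$-indecomposable case is unnecessary once this argument is in place, since it works directly from any decomposition $(X_1,X_2)$.
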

\begin{proof}
	Suppose first that $X$ is indecomposable. 
	By Theorem~\ref{thm:convergeUp}, there is a sequence $\seq U n$ of complementary domains of $X$ such that $\lim_{n \rightarrow \infty} \bd U_n = X$.
	For each $n \in \nat$, let $K_n$ be a generalized crosscut of $U_n$. 
	Let $A_n$ and $B_n$ be shadows of $K_n$ so that $A_n \cup B_n = \bd U_n$, with $H_d(A_n, X) \le H_d(B_n, X)$.

	Then $A_n$ and $B_n$ are subcontinua of $X$. 
	There is a subsequence $\subseq U n i$ so that $\subseq A n i$ and $\subseq B n i$ converge to subcontinua $A$ and $B$ of $X$. 
	We see that $A \cup B = X$, since $A_{n_i} \cup B_{n_i} = \bd U_{n_i}$. 
	Since $X$ is indecomposable, either $A$ or $B$ is not a proper subcontinuum of $X$.
	We have $H_d(X, A_{n_i}) \le H_d(X, B_{n_i})$, so $A = X$. 
	This is true for all choices of subsequences $\subseq U n i$ where $\subseq A n i$ and $\subseq B n i$ both converge, so $\seq U n$ satisfies the double-pass condition.
	
	Now, assume that a continuum $X \subset S$ has a sequence $\seq U n$ of complementary domains satisfying the double-pass condition. 
	By way of contradiction, suppose that $(X_1, X_2)$ is a decomposition of $X$. 
	There are disjoint closed disks $D_1,D_2 \subset S$ so that, for $i\in \{1,2\}$, 
	\begin{enumerate}
		\item the interior of $D_i$ intersects $X_i$, and 
		\item if $i \neq j$, $D_i$ is disjoint from $X_{j} \cup D_{j}$. 
	\end{enumerate}
	Since $X$ satisfies the double-pass condition, there exists $N \ge 1$ such that, for all $n \ge N$, no generalized crosscut of $U_n$ separates $D_1 \cap U$ from $D_2 \cap U$ in $U_n$. Without loss of generality, $N = 1$.
	
	Now we make use of the assumption that $X$ is shielded. 
	Since $\bd U_n \neq X$ for each $n$ and $\lim_{n \rightarrow \infty} \bd U_n = X$, no element of $\seq U n$ appears infinitely often in the sequence. 
	Therefore, we can assume by passing to a subsequence that $\seq U n$ consists of different simply connected complementary domains of $X$.  
	
	Let $\phi_n:U_n \rightarrow \Disk$ be a conformal isomorphism.  According to Lemma~\ref{lem:unlinked}, the sets $E_{n,1}$ and $E_{n,2}$, comprised of the endpoints of the crosscuts constituting $\phi_n((\bd D_1) \cap U_n)$ and $\phi_n((\bd D_2) \cap U_n)$ separate each other in $\ucirc$.
	It is evident that $D_1 \cap U_n$ and $D_2 \cap U_n$ may or may not separate the other in $U_n$.
	By passing to a subsequence, we can assume that, for every $n \in \nat$, either 
	\begin{enumerate}
	\item $D_1 \cap U_n$ separates $D_2 \cap U_n$ in $U_n$, or 
	\item neither separates the other in $U_n$.
	\end{enumerate}
	We will find a crosscut $F_n \subset \Disk \setminus \phi(D_2)$ of $\Disk$ which joins points of $E_{n,1}$ which separate $E_{n,2}$ in $\ucirc$.
	In the case that $D_1 \cap U_n$ separates $D_2 \cap U_n$, a component of $(\bd D_1) \cap U_n$ also does, so we can define $F_n$ as a component of $\phi((\bd D_1) \cap U_n)$ which separates $\phi_n(D_2 \cap U_n)$ in $\Disk$. 
	In the second case, there are components $K_1$ and $K_2$ of $\phi_n((\bd D_1) \cap U_n)$ whose endpoints separate $E_{2,n}$ in $\ucirc$.
	Let $F_n$ be the union of an arc joining $k_1 \in \Int(K_1)$ to $k_2 \in \Int(K_2)$ with one component each of $K_1 \setminus \{k_1\}$ and $K_2 \setminus \{k_2\}$.
	Notice that $\phi_n^{-1}(F_n)$ is in fact a crosscut of $U_n$ in either case, since points of $F_n$ close to $\bd U_n$ are in $\bd D_1$.

	Let $A_n$ be an arc in $U_n$ joining components of $D_2 \cap U_n$ which are separated by $\phi_n^{-1}(F_n)$, intersecting $\phi_n^{-1}(F_n)$ transversely exactly once. 
	Let $C_n \subset D_2 \cup A_n$ be a simple closed curve containing $A_n$, formed by joining the endpoints of $A_n$ to a designated point $a \in D_2$ with line segments otherwise disjoint from $C_1 \cup \ldots C_{n-1}$.
	
	Let $Y = C_1 \cup C_2 \cup \ldots \cup C_{2g+1}$, where $g$ is the genus of $S$.
	This is the one-point union of $2g+1$ simple closed curves, so $S \setminus Y$ is not connected (see for instance \cite[5.14]{Aleksandrov:1956fk}). 
	Since the arcs $F_1, \ldots, F_{2g+1}$ intersect $Y$ exactly once transversely and have their endpoints in $X_2$, we conclude that each component of $S \setminus Y$ contains points of $X_1$.
	However, since $Y$ is the union of arcs in $S \setminus X$ and $B_2$, we see that $Y$ is disjoint from $X_1$. 
	This contradicts the assumption that $X_1$ is connected.
\end{proof}

\begin{rem}
	Suppose that the continuum in question has locally connected boundary components.  
	Then Lemma~\ref{lem:unlinked} provides a crosscut (rather than a generalized crosscut) to show that $X$ fails the double-pass condition.
	Hence, we can observe that a continuum with locally connected boundary components is indecomposable if and only if it satisfies the double-pass condition with crosscuts.
\end{rem}

\section{Conclusion}\label{sec:conclusion} 
Though the results presented here are advances, the corresponding theorems for planar continua are much stronger.  
Here we discuss stronger generalizations which may hold.

\subsection{Burgess's Theorem}
Theorem~\ref{thm:burgessSurface} is a direct translation of the corollary to \cite[Theorem 10]{Bur51}. 
In turn, that theorem follows from a fundamental theorem of C.~E. Burgess, reproduced below.
\begin{thmn}{\cite[Theorem 9]{Bur51}}
	Let $H \subset \sphere$ be closed, $M \subset \sphere$ a continuum, and $M_1$, $M_2$, and $M_3$ subcontinua of $M$. 
	Suppose that $K_1$, $K_2$, and $K_3$ are closed disks, disjoint from each other and $H$, and $K_i$ intersects $M_j$ if and only if $i = j$. 
	Then there do not exist three complementary domains of $M \cup H$ all of which intersect $K_1$, $K_2$, and $K_3$. 
\end{thmn}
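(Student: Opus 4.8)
The plan is to argue by contradiction and to manufacture, out of the hypothetical configuration, an embedded copy of $K_{3,3}$ in $\sphere$: since Lemma~\ref{lem:bipartite} with $g=1$ says that any closed surface containing (a subdivision of) $K_{3,3}=K_{3,4g-1}$ has genus at least $1$, while $\sphere$ has genus $0$, this is the desired contradiction. In effect this is the $g=0$ instance of the machinery of Section~\ref{sec:burgess}, so the cleanest route is to reprise the proof of Theorem~\ref{thm:burgessSurface} with $4g+3=3$, using $M\cup H$ in place of $X$, the subcontinua $M_1,M_2,M_3$ in place of $X_1,X_2,X_3$, and three \emph{given} complementary domains in place of the constructed $U_{n_i}$. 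So, assume for contradiction that $V_1,V_2,V_3$ are distinct complementary domains of $M\cup H$, each of which meets all of $K_1$, $K_2$, $K_3$.

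I would build the cat's cradle exactly as in that proof. Each $V_\ell$ is an open connected subset of a surface, hence arcwise connected, and meets all three disks, so choose an arc $\beta_\ell\subset V_\ell$ which meets $K_1$, $K_2$, and $K_3$, is irreducible with respect to this property, and meets each $\bd K_i$ in a finite set (possible since $V_\ell$ is open); irreducibility then forces the two endpoints of $\beta_\ell$ onto two of the disks, with $\Int(\beta_\ell)$ meeting exactly the third ("the middle disk"). Having arranged that a common disk, say $K_2$, is the middle disk for all three $\beta_\ell$, extend each $\beta_\ell$ within $K_1$ and within $K_3$ by arcs to designated points $d_1\in K_1$, $d_3\in K_3$, obtaining a cat's cradle $\{\tilde\beta_1,\tilde\beta_2,\tilde\beta_3\}$ between $d_1$ and $d_3$ through $K_2$. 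The portions of the $\tilde\beta_\ell$ inside $V_\ell$ are pairwise disjoint automatically (the $V_\ell$ are distinct components of $\sphere\sm(M\cup H)$), disjointness of the portions inside $K_1$ and $K_3$ is arranged inside those disks, and $\bigcup_\ell\tilde\beta_\ell$ misses $M_2$ since each $\beta_\ell$ lies in $V_\ell\subset\sphere\sm(M\cup H)$ and the extensions lie in $K_1\cup K_3$, which is disjoint from $M_2$.

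For the endgame I would again follow Theorem~\ref{thm:burgessSurface}. Using Lemma~\ref{lem:linearOrder}, pass to sub-arcs so that $\{\beta_1\cap K_2,\beta_2\cap K_2,\beta_3\cap K_2\}$ yields a linearly ordered family of chords of $K_2$. Because $V_\ell$ and $V_{\ell'}$ are distinct components of $\sphere\sm(M\cup H)$ and $K_2$ misses $H$, the set $M\cap K_2$ separates $\beta_\ell\cap K_2$ from $\beta_{\ell'}\cap K_2$ in $K_2$; so I can pick points $x_1,x_2$ of $M$ lying strictly between consecutive chords in $K_2$, and then --- using that $M$ is a continuum disjoint from $\bigcup_\ell\beta_\ell$ --- an arc $J\subset\sphere\sm\bigcup_\ell\beta_\ell$ through $x_1,x_2$, together with disjoint arcs in $K_2$ joining $J$ to the three $\tilde\beta_\ell$. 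Collapsing $J$ to a point (which leaves $\sphere$ unchanged) then exhibits an embedded $K_{3,3}$, with parts $\{d_1,d_3,[J]\}$ and the three points on the $\tilde\beta_\ell$, so Lemma~\ref{lem:bipartite} forces genus at least $1$, a contradiction. Alternatively one could finish as in Theorem~\ref{thm:surfaceChar}, deducing the contradiction from the connectedness of $M_1$ (which, unlike $M_2$, is disjoint from the entire graph constructed) rather than from a genus count.

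The step I expect to be the main obstacle is the coordination "arrange that a common disk is the middle disk for all three $\beta_\ell$." In Theorem~\ref{thm:burgessSurface} this was free: there one had infinitely many distinct complementary domains and could pass to a subsequence with constant middle disk. Here there are only three domains, and a priori their irreducible arcs could traverse three different disks, in which case the naive construction above fails (one checks that three connected sets spanning $\{d_1,d_2,d_3\}$ through three different "middle" disks give only a doubled triangle, which is planar). I would first try to show that the $\beta_\ell$ can always be chosen with a common middle disk --- this is where the hypothesis that $M$ is a continuum should enter decisively --- and, failing that, treat the "all middle disks distinct" configuration separately, which is presumably where the genus-$0$-specific tools of \cite{Bur51} (the Jordan curve theorem and the unicoherence of $\sphere$ and of the disks, already used in Case~2 of Lemma~\ref{lem:linearOrder}) must do the real work. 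The other place requiring care is routing all the auxiliary arcs into general position so that the final object is genuinely a graph; that is routine but tedious.
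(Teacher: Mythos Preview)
The paper does not prove this statement: it is quoted from \cite{Bur51} in Section~\ref{sec:conclusion}, and Question~\ref{ques:BurgessDisks} explicitly asks whether an analogue holds for closed surfaces. So there is no proof here to compare against; your proposal tries to recover the planar case from the machinery of Section~\ref{sec:burgess}, and your self-diagnosis of the obstruction is accurate. The step ``arrange that a common disk is the middle disk for all three $\beta_\ell$'' is a genuine gap, not a technicality. In Theorem~\ref{thm:burgessSurface} the infinite supply of distinct complementary domains is used precisely at this point, via pigeonhole, and nowhere else in an essential way; with only three domains there is no such argument, and your own observation that three arcs with pairwise different middle disks yield only a planar graph shows the cat's-cradle template cannot be applied blindly. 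Burgess's actual proof in \cite{Bur51} does not run this way: it uses plane-specific separation tools (theta-curves and the crosscut theorems available in $\sphere$) rather than a genus obstruction. That the paper leaves Question~\ref{ques:BurgessDisks} open is evidence that the author likewise did not see how to push the cat's-cradle method through with only finitely many domains.

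There is a second, smaller gap you should note. In Theorem~\ref{thm:burgessSurface} one has $X = X_1 \cup X_2 \cup X_3$, so $X \cap D_2 \subset X_2$, and the arc $J$ is routed near $X_2$, which is disjoint from all of the extended arcs $\tilde A_n$. In Burgess's hypotheses the subcontinua $M_1, M_2, M_3$ need not cover $M$, so your separating points $x_1, x_2 \in M \cap K_2$ need not lie in $M_2$. You compensate by routing $J$ near $M$ itself, but $M$ meets $K_1$ and $K_3$, which is exactly where the extensions $\tilde\beta_\ell \setminus \beta_\ell$ live; hence $J$ may cross $\tilde\beta_\ell$ there, and the resulting object is not obviously a $K_{3,3}$ minor. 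This is probably repairable with more care in choosing the extensions and $J$, but it does not come for free from the template of Theorem~\ref{thm:burgessSurface}.
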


\begin{ques}\label{ques:BurgessDisks}
  Is there an extension of this theorem to closed surfaces?
\end{ques}

Using Burgess's theorem, one can prove that a planar continuum which is the common boundary of three of its complementary domains \cite{Kur68} or is the impression of one of its prime ends \cite{Rut35} is either indecomposable or 2-indecomposable. 
Also, a planar continuum which is the limit of a disjoint sequence of shadows is indecomposable \cite{CurMayTym07}. 
Theorem~\ref{thm:burgessSurface} was proven here without proving the analog of Burgess's theorem, but interesting theorems would surely follow if Question~\ref{ques:BurgessDisks} had a positive answer.

\subsection{Kuratowski's Theorem}
In particular, the requirement of Theorem~\ref{thm:burgessSurface} that $X$ have infinitely many distinct domains whose boundaries limit to the continuum is probably stronger than necessary for the conclusion. This motivates the following question. 
\begin{ques}
	Is there a finite version of Theorem~\ref{thm:burgessSurface} like Kuratowski's theorem?  Specifically, if a continuum $X$ in a closed surface of genus $g$ is the common boundary of three complementary domains, what is the maximal $n(g)$ so that $X$ is the union of $n(g)$ indecomposable continua?
\end{ques}

A partial answer is illustrated by an example.  Let $C_1, C_2, C_3 \subset \rsphere \setminus \{\infty\}$ be homeomorphic copies of the pseudocircle such that, for distinct $i, j, k$,
\begin{enumerate}
	\item $C_i$ is contained in the closure of the component of $\sphere \setminus C_j$ containing $\infty$, 
	\item $C_i \cap C_j$ is a single point not in $C_k$, and
	\item \label{item:composant} $C_i \cap C_k$ is in a different composant of $C_k$ than $C_j \cap C_k$.
\end{enumerate}
This is a continuum with $5$ complementary domains.  
Two complementary domains, $U$ and $V$, meet each element of $\{C_1, C_2, C_3\}$.  
There are three complementary domains, $W_1$, $W_2$, and $W_3$, where $W_i$ is the bounded complementary domain of $C_i$.  

No proper subcontinuum of $C_1 \cup C_2 \cup C_3$ can separate $U$ from $V$ because of condition~\ref{item:composant}, so $\bd U = \bd V = C_1 \cup C_2 \cup C_3$.
Remove disks $D_1, D_1' \subset W_1$, $D_2 \subset W_2$, and $D_3 \subset W_3$, and paste cylinders joining $\bd D_1$ to $\bd D_2$ and $\bd D_1'$ to $\bd D_3$ in a way resulting in an orientable surface $S$ of genus 2.
Then $C_1 \cup C_2 \cup C_3$ is the common boundary of three domains, though it is not the union of just two indecomposable continua.   
This indicates that an extension of Kuratowski's theorem along these lines must take the genus into account.

\subsection{Characterization Theorem}
Theorem~\ref{thm:dblpass} is significantly weaker than Theorem~\ref{thm:planchar}, since Theorem~\ref{thm:planchar} holds for continua which are boundaries of connected open sets in $\sphere$.
The author suspects the statement of Theorem~\ref{thm:planchar} holds true in surfaces, with some qualification. 
For instance, an essentially embedded simple closed curve in a torus satisfies the double-pass condition as it is stated here, but artificially so. 

One can define a \emph{generalized crosscut with curves} of a domain $U$ as the disjoint union of a generalized crosscut and a finite number of simple closed curves.  By defining shadows in analogy to shadows of generalized crosscuts, one obtains an equivalent notion in simply connected domains -- adding disjoint simple closed curves to a generalized crosscut does not change the shadows.
However, generalized crosscuts with curves allows stronger separation in domains which are not simply connected.  
 
\begin{ques}
	Let $X$ be a continuum in a closed surface $S$. If for every sequence $(C_i)_{i=1}^\infty$ of generalized crosscuts with curves there exists a choice of shadows $(S_i)_{i=1}^\infty$, where $S_i$ a shadow of $C_i$, which converge to $X$, is $X$ indecomposable? 
\end{ques}

\subsection{Higher Dimensions}
One may ask how useful recognition from the complement might be in more general spaces.  
Specifically, in $\mathbb R^3$, this approach does not look useful. 
Formulations of prime end theory in $\mathbb R^3$ have had limited applicability.  
Further, M.~Luba{\'n}ski constructed in \cite{Lub53} a family of absolute neighborhood retracts which can be the common boundary of any finite number of domains in $\mathbb R^3$.
Kuratowski \cite[p. 560]{Kur68} noted that this can be extended to an infinite number of complementary domains.

\bibliographystyle{plain}

\end{document}